\documentclass[twocolumn]{autart}

\usepackage{graphicx}
\usepackage{float}
\usepackage{booktabs}
\usepackage{tabularx}
\usepackage{array}
\usepackage{adjustbox}
\usepackage{multirow}
\usepackage{placeins}
\usepackage{tikz}
\usetikzlibrary{arrows.meta,positioning,shapes.geometric}
\usepackage{subcaption}
\usepackage{algorithm}
\usepackage{algpseudocode}
\usepackage{amssymb}
\let\theoremstyle\relax          
\usepackage{amsthm}          
\usepackage[font=small,labelfont=bf]{caption}

\newcolumntype{L}[1]{>{\raggedright\arraybackslash}p{#1}}
\newcolumntype{C}[1]{>{\centering\arraybackslash}p{#1}}

\newcommand{\pplus}{\mathbin{+}\allowbreak}
\newcommand{\mminus}{\mathbin{-}\allowbreak}

\theoremstyle{plain}
\newtheorem{proposition}{Proposition}[section]
\newtheorem{lemma}{Lemma}[section]

\theoremstyle{remark}
\newtheorem{remark}{Remark}[section]

\renewcommand{\arraystretch}{1.05}
\usepackage{mathtools} 

\begin{document}

\begin{frontmatter}

\title{Multiparametric continuous-time optimal control via Pontryagin's Maximum Principle: 
explicit solutions and comparisons with discrete-time formulations\thanksref{footnoteinfo}} 

\thanks[footnoteinfo]{This paper was not presented at any IFAC 
meeting. Corresponding author  E. N. Pistikopoulos E-mail. stratos@tamu.edu. 
}

\author[TAMUE,TAMUC]{Lida Lamakani}\ead{lida.lamakani@tamu.edu},    
\author[TAMUE,TAMUC]{Efstratios N. Pistikopoulos}\ead{stratos@tamu.edu}             

\address[TAMUE]{Texas A\&M Energy Institute, Texas A\&M University, College Station, USA}  
\address[TAMUC]{Artie McFerrin Department of Chemical Engineering, Texas A\&M University, College Station, USA}             

\begin{keyword}                           
Multiparametric programming; Continuous-time optimal control; Explicit MPC; Pontryagin's maximum principle; Critical regions; Switching times            
\end{keyword}                             
\begin{abstract}
Model predictive control offers a powerful framework for managing constrained systems, but its repeated online optimization can become computationally prohibitive. Multiparametric programming addresses this challenge by precomputing optimal solutions offline, enabling real-time control through simple function evaluation. While extensively developed for discrete-time systems, this approach suffers from combinatorial growth in solution complexity as discretization is refined. This paper presents a systematic continuous-time multiparametric framework for linear-quadratic optimal control that directly solves Pontryagin's optimality conditions without discretization artifacts. Through two illustrative examples, we demonstrate that continuous-time formulations yield solutions with substantially fewer critical regions than their discrete-time counterparts. Beyond this reduction in partition complexity, the continuous-time approach provides deeper insight into system dynamics by explicitly identifying switching times and eliminating discretization artifacts that obscure the true structure of optimal control policies. Knowledge of the switching structure also accelerates online optimization methods by providing analytical information about the solution topology. Clear step-by-step algorithms are provided for identifying switching structures, computing parametric switching times, and constructing critical regions, making the continuous-time framework accessible for practical implementation.
\end{abstract}

\end{frontmatter}

\section{Introduction}

Model predictive control (MPC) is widely applied to complex constrained systems. The approach works by using a model to predict how the system will behave, then choosing control actions that optimize performance while staying within operational limits \cite{rawlings2020model}. This has made MPC especially valuable in settings where safety and physical constraints matter—chemical plants, automotive systems, robotics, and finance \cite{muske1993model,richalet1993industrial,falcone2007predictive,diehl2006fast,dombrovskii2018model}.

MPC is typically implemented through a receding horizon strategy \cite{rawlings2020model}. At each step, only the first control action is applied before 
re-solving with the updated state, requiring a full 
optimization at every sampling instant.

This computational burden naturally leads to the following question: can the optimization be solved once offline, rather than repeatedly at each time step? Multiparametric programming offers a solution by treating the current state as a parameter, the optimization problem can be solved for all feasible states offline. Pistikopoulos, Bemporad, and colleagues \cite{pistikopoulos2002line,bemporad2002explicit} showed that the optimal control law can be expressed as a piecewise affine function over a partition of the state space. Online control then reduces to region identification and function evaluation, eliminating real-time optimization. This explicit MPC framework has been extended to hybrid systems \cite{borrelli2005explicit,oberdieck2015explicit}, uncertain and robust systems \cite{bemporad2003min,sakizlis2004design,kouramas2013algorithm}, and nonlinear systems \cite{pistikopoulos2020multi,charitopoulos2017closed,charitopoulos2017nonlinear}, demonstrating substantial computational benefits.

The majority of explicit MPC research has considered discrete-time formulations. These lead to finite-dimensional optimization problems that are computationally tractable, explaining their popularity. A key limitation, however, is that most physical systems are inherently continuous-time. Chemical reactors, vehicle dynamics, robotic manipulators, power systems, and biological processes are all naturally modeled by differential equations. Discretization introduces approximations, and while finer time grids improve accuracy, they also increase the problem dimension. Critically, the number of critical regions in the parametric partition tends to grow substantially with the number of discretization points, increasing both offline computation and online lookup cost. Can multiparametric methods be developed directly for continuous-time systems to avoid this combinatorial explosion?

A continuous-time formulation offers an alternative approach that preserves the natural structure of the underlying dynamics. Rather than discretizing, the optimization is formulated directly in terms of the differential equations. Pontryagin's Maximum Principle \cite{pontryagin1962mathematical} provides necessary conditions for optimality in the form of a two-point boundary value problem. For linear-quadratic systems, these conditions admit analytical solutions. In the presence of constraints, the optimal solution exhibits a piecewise structure: different analytical expressions govern the solution over distinct time intervals, with transitions occurring when constraints become active or inactive. The switching times depend on the initial state, which is precisely the parametric dependence required for explicit MPC.

When path constraints are active, the optimality conditions form a differential-algebraic equation (DAE) system combining the dynamics with algebraic constraint equations \cite{brenan1995numerical}. 
Biegler and colleagues have developed simultaneous approaches that discretize DAE systems and solve large-scale nonlinear programs \cite{biegler2012control}; sensitivity-based strategies have been proposed to reduce the associated online computational cost \cite{zavala2009advanced,zavala2008fast,de1995extension}, though the discretization-dependent complexity discussed above remains. Bonvin and coworkers have developed complementary methods for batch processes that systematically exploit the structure of optimality conditions to handle path constraints \cite{srinivasan2003dynamic,srinivasan2003,franccois2005use}. Their work demonstrates that understanding which constraints are active and when is crucial for both computational efficiency and physical insight—a perspective that aligns directly with the multiparametric continuous-time framework developed here.

Sakizlis and coworkers \cite{sakizlis2004design,sakizlis2005explicit} pioneered this approach for linear continuous-time systems, demonstrating that the parameter space can be partitioned into critical regions where the switching times vary continuously as functions of the parameters while the sequence of active constraints remains fixed. Within each region, the optimal trajectories have explicit analytical forms—typically exponentials in time multiplied by functions of parameters—in contrast to the piecewise affine laws obtained in discrete-time formulations.

Building on this foundation, the present work makes three contributions. First, we provide complete step-by-step algorithms that transform the continuous-time approach from a theoretical concept into a practical computational method. We reformulate the jump conditions using state continuity instead of Hamiltonian continuity, which is simpler to implement and more numerically stable. Second, we conduct the first systematic study across multiple discretization levels, showing that continuous-time formulations consistently require far fewer critical regions—often by an order of magnitude—while discrete-time formulations exhibit combinatorial growth as discretization is refined. Third, we demonstrate that continuous-time solutions provide explicit analytical expressions for switching times and optimal trajectories, offering direct physical insight and enabling more efficient online optimization.

In both examples, the continuous-time partition holds at 5 critical regions regardless of accuracy requirements, while the discrete-time count grows from 11 to over 60 as discretization is refined. This difference has direct practical consequences: online region lookup through 5 regions is substantially faster than through 60, which matters for systems operating under tight computational budgets.

The paper is organized as follows. Section~2 formulates the continuous-time optimal control problem and reviews optimality conditions. Section~3 develops the multiparametric framework. Section~4 presents two illustrative examples comparing continuous-time and discrete-time approaches. Section~5 discusses implementation and computational aspects. Section~6 concludes.
\section{Continuous-Time Optimal Control Structure}
This section formulates both continuous-time and discrete-time optimal control problems and reviews the corresponding optimality conditions.
\subsection{Continuous-Time Optimal Control Problem}
Following the standard formulation in \cite{sakizlis2005explicit,Pistikopoulos2007}, the system dynamics are described by
\begin{equation}
\dot{x}(t) = f\big(x(t),u(t)\big),
\label{eq:ct_dynamics}
\end{equation}
where $x(t)\in\mathbb{R}^n$ denotes the state vector and $u(t)\in\mathbb{R}^{n_u}$ is the control input. For a given initial condition $x(t_0)=x$, the continuous-time optimal control problem is posed as
\begin{align}
\min_{x(\cdot),\,u(\cdot)} \quad
& J_c(x) = \int_{t_0}^{t_f} \ell\big(x(t),u(t)\big)\,dt
+ \phi\big(x(t_f)\big)
\label{eq:ct_cost} \\[0.4em]
\text{s.t.}\quad
& \dot{x}(t) = f\big(x(t),u(t)\big),
\label{eq:ct_ocp_dynamics} \\
& g\big(x(t),u(t)\big) \le 0,
\quad \forall\, t\in[t_0,t_f],
\label{eq:ct_path_constraints} \\
& \psi\big(x(t_f)\big) \le 0,
\label{eq:ct_terminal_constraints} \\
& x(t_0) = x .
\label{eq:ct_initial_condition}
\end{align}
For comparison, we also consider a discrete-time formulation obtained by sampling the dynamics with period $\Delta t$, following standard discrete-time optimal control and MPC formulations \cite{pistikopoulos2020multi}. The sampled model is given by
\begin{equation}
x_{k+1} = f_d(x_k,u_k),
\qquad k = 0,\ldots,N-1,
\label{eq:dt_dynamics}
\end{equation}
where $N=(t_f-t_0)/\Delta t$ and $x_0=x$. A control horizon $M\le N$ is introduced such that only the inputs $\{u_k\}_{k=0}^{M-1}$ are treated as independent decision variables. For $k=M,\ldots,N-1$, the control input is held constant,
\begin{equation}
u_k = u_{M-1}, \qquad k=M,\ldots,N-1.
\label{eq:dt_blocking}
\end{equation}
The corresponding discrete-time optimal control problem is
\begin{align}
\min_{\{x_k,u_k\}} \quad
& J_d(x) =
\sum_{k=0}^{N-1} \ell(x_k,u_k)
+ \phi(x_N)
\label{eq:dt_cost} \\[0.6em]
\text{s.t.}\quad
& x_{k+1} = f_d(x_k,u_k),
\qquad k = 0,\ldots,M-1,
\label{eq:dt_ocp_dynamics_1} \\[0.2em]
& x_{k+1} = f_d(x_k,u_{M-1}),
\qquad k = M,\ldots,N-1,
\label{eq:dt_ocp_dynamics_2} \\[0.6em]
& g(x_k,u_k) \le 0,
\qquad k = 0,\ldots,M-1,
\label{eq:dt_path_constraints_1} \\[0.2em]
& g(x_k,u_{M-1}) \le 0,
\qquad k = M,\ldots,N-1,
\label{eq:dt_path_constraints_2} \\[0.6em]
& \psi(x_N) \le 0,
\label{eq:dt_terminal_constraints}
\end{align}
\subsection{Assumptions}
Throughout the paper, the following assumptions are adopted:
\begin{itemize}
\item The system dynamics are linear and time-invariant.
\item State and input constraints are linear inequalities.
\item The performance index is quadratic, with $Q \succeq 0$ and $R \succ 0$.
\item A finite time horizon is considered.
\item The control horizon satisfies $M \le N$; throughout this work $M = N$.
\end{itemize}

\subsection{Pontryagin's Minimum Principle Conditions}

The necessary conditions for optimality of the continuous-time problem are given by Pontryagin's Minimum Principle (PMP) \cite{kopp1962pontryagin,augustin2001computational}. The continuous-time optimal control problem is stated as
\begin{equation}
\min_{u(t)} \; \Phi(x(t_f)) + \int_{t_0}^{t_f} L(x(t),u(t),t)\,dt
\end{equation}
subject to
\begin{equation}
\dot{x}(t) = f(x(t),u(t),t),
\end{equation}
\begin{equation}
g_\ell(x(t),u(t),t) \le 0, \quad \ell=1,\ldots,m,
\end{equation}
\begin{equation}
\psi_j(x(t_f)) = 0, \quad j=1,\ldots,p,
\qquad x(t_0)=x_0 .
\end{equation}
The Hamiltonian function is defined as
\begin{equation}
\begin{aligned}
H(x,u,\lambda,\mu,t)
= {} & L(x,u,t)
+ \lambda^\top(t)\,f(x(t),u(t),t) \\
& + \sum_{\ell=1}^{m} \mu_\ell(t)\,g_\ell(x,u,t).
\end{aligned}
\end{equation}
The PMP conditions consist of the state equations
\begin{equation}
\dot{x}(t) = \nabla_\lambda H = f(x(t),u(t),t),
\end{equation}
the costate equations
\begin{equation}
\dot{\lambda}(t) = -\nabla_x H,
\end{equation}
and the optimality condition
\begin{equation}
\nabla_u H = 0.
\end{equation}
Constraints are enforced through complementary slackness,
\begin{equation}
\mu_\ell(t)\,g_\ell(x(t),u(t),t)=0,
\quad \ell=1,\ldots,m,
\end{equation}
together with feasibility conditions
\begin{equation}
\mu_\ell(t)\ge 0,
\qquad
g_\ell(x(t),u(t),t)\le 0.
\end{equation}
The terminal boundary conditions for the costate variables are given by the transversality conditions associated with terminal equality constraints \cite{stengel1994optimal},
\begin{equation}
\lambda(t_f)
=
\nabla_x \Phi\big(x(t_f)\big)
+
\sum_{j=1}^{p} \nu_j\,\nabla_x \psi_j\big(x(t_f)\big).
\end{equation}
When the activity status of constraints changes, the optimal solution may exhibit corner points at which the classical Weierstrass--Erdmann conditions apply \cite{gelfand2000calculus}. In particular, for problems with continuous dynamics and no switching costs, the costate variables and the Hamiltonian are continuous across the switching time $t_s$, leading to the jump conditions
\begin{equation}
\lambda(t_s^+) = \lambda(t_s^-),
\qquad
H(t_s^+) = H(t_s^-),
\end{equation}
where $t_s \in (t_0,t_f)$ denotes a switching point at which the activity status of one or more path constraints changes. The structure and interpretation of switching points are discussed in detail in the subsequent section.

For comparison, a discrete-time counterpart of the above problem is obtained by time discretization over a finite horizon. Let $t_k=t_0+k\Delta t$, $k=0,\ldots,N$, and define $x_k\approx x(t_k)$ and $u_k\approx u(t_k)$. The discrete-time optimal control problem is written as
\begin{equation}
\min_{\{x_k,u_k\}}
\; \Phi(x_N) + \sum_{k=0}^{N-1} L(x_k,u_k,k)
\end{equation}
subject to
\begin{equation}
x_{k+1} = f_d(x_k,u_k),
\quad k=0,\ldots,N-1,
\end{equation}
\begin{equation}
g_{\ell,k}(x_k,u_k) \le 0,
\quad \ell=1,\ldots,m,\quad k=0,\ldots,N-1,
\end{equation}
\begin{equation}
\psi_j(x_N) = 0,
\quad j=1,\ldots,p,
\qquad x_0=x_0.
\end{equation}
Unlike the continuous-time case, the discrete-time problem is finite-dimensional and its optimality conditions can be written using the Karush--Kuhn--Tucker (KKT) conditions. The associated Lagrangian function is
\begin{align}
\mathcal{L}
= {} & \Phi(x_N)
+ \sum_{k=0}^{N-1} L(x_k,u_k,k) \nonumber\\
& + \sum_{k=0}^{N-1}
\lambda_{k+1}^\top\Big(x_{k+1}-f_d(x_k,u_k)\Big) \nonumber\\
& + \sum_{k=0}^{N-1}\sum_{\ell=1}^{m}
\mu_{\ell,k}\,g_{\ell,k}(x_k,u_k)
+ \sum_{j=1}^{p}\nu_j\,\psi_j(x_N).
\end{align}
The KKT conditions consist of the stationarity conditions
\begin{equation}
\nabla_{x_k} \mathcal{L}=0,
\quad k=1,\ldots,N,
\end{equation}
\begin{equation}
\nabla_{u_k} \mathcal{L}=0,
\quad k=0,\ldots,M-1,
\end{equation}
\begin{equation}
\frac{\partial \mathcal{L}}{\partial \nu_j}=0,
\quad j=1,\ldots,p,
\end{equation}
together with feasibility of the constraints
\begin{equation}
x_{k+1}-f_d(x_k,u_k)=0,
\quad k=0,\ldots,N-1,
\end{equation}
\begin{equation}
g_{\ell,k}(x_k,u_k)\le 0,
\quad \ell=1,\ldots,m,\quad k=0,\ldots,N-1,
\end{equation}
\begin{equation}
\psi_j(x_N)=0,
\quad j=1,\ldots,p,
\end{equation}
dual feasibility
\begin{equation}
\mu_{\ell,k}\ge 0,
\quad \ell=1,\ldots,m,\quad k=0,\ldots,N-1,
\end{equation}
and complementary slackness
\begin{equation}
\mu_{\ell,k}\,g_{\ell,k}(x_k,u_k)=0,
\quad \ell=1,\ldots,m,\quad k=0,\ldots,N-1.
\end{equation}
A key distinction between the two formulations lies in the nature of their optimality conditions. In the continuous-time case, Pontryagin's Minimum Principle yields a system of ordinary differential equations with costates and multipliers evolving continuously over time. In contrast, the discrete-time formulation leads to a finite set of algebraic optimality conditions through the KKT framework. This fundamental difference between differential and algebraic conditions directly influences the structure of switching times and critical regions in continuous- and discrete-time formulations.

\subsection{Switching Points and Jump Conditions}
\label{subsec:switching_points}
In continuous-time constrained optimal control, the optimal solution typically changes its structure as constraints become active or inactive over time. We refer to time intervals where the same constraints remain active as \emph{arcs}. During each arc, the state and control trajectories follow a specific analytical form dictated by which constraints are currently enforced. When a constraint switches from active to inactive (or vice versa), the solution structure changes; these transition points in time are called \emph{switching points} or \emph{corners}.

For constrained arcs, it is useful to distinguish between the time at which a constraint becomes active and the time at which it is released. Each constrained arc is therefore characterized by two switching instants:
\begin{itemize}
    \item \textbf{Entry point ($t_{st}$):} the time at which the $s$-th constraint becomes active.
    \item \textbf{Exit point ($t_{sx}$):} the time at which the $s$-th constraint is deactivated.
\end{itemize}
Figure~\ref{fig:switching_schematic} illustrates these concepts schematically, while Table~\ref{tab:switching_multipliers} summarizes the relationship between switching points, constraint activity, and the associated Lagrange multipliers.

\begin{figure}[t]
    \centering
    \setlength{\fboxrule}{0.55pt}
    \setlength{\fboxsep}{0.5pt}
    \fbox{\includegraphics[width=0.92\linewidth]{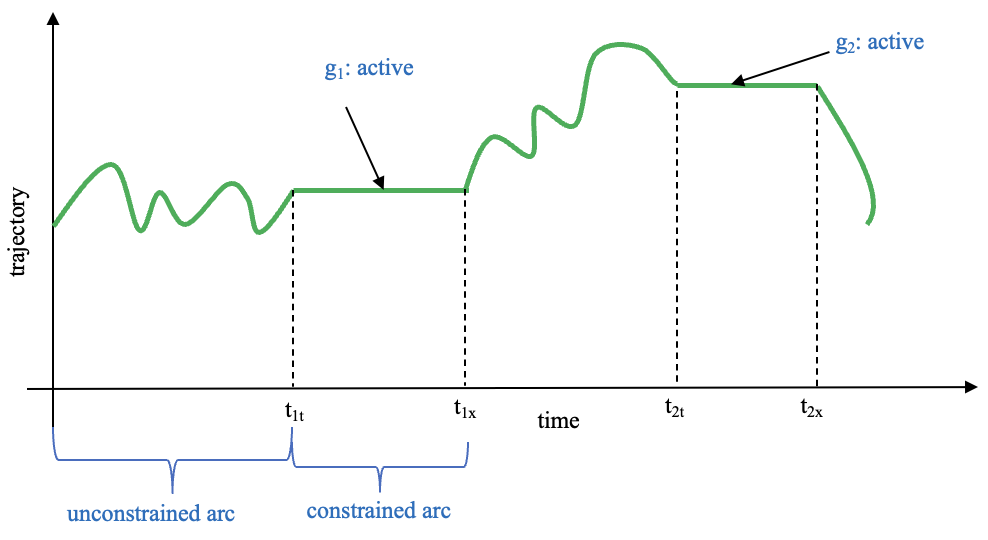}}
    \caption{Schematic illustration of switching points and constrained arcs. The times $t_{st}$ and $t_{sx}$ denote the entry and exit points of a constrained arc, respectively. Changes in the active constraint set are accompanied by corresponding changes in the associated Lagrange multipliers.}
    \label{fig:switching_schematic}
\end{figure}

\begin{table}[t]
\centering
\caption{Constraint signs and Lagrange multipliers at entry and exit switching points.}
\label{tab:switching_multipliers}
\renewcommand{\arraystretch}{1.1}
\setlength{\tabcolsep}{6pt}
\begin{tabular}{lll}
\toprule
\textbf{Corner} & \textbf{Constraint signs} & \textbf{Lagrange multipliers} \\
\midrule
\multicolumn{3}{l}{\textbf{Entry/exit of constraint $g_1$}} \\
$t_{1t}^{-}$ & $g_1 < 0,\; g_2 < 0$ & $\mu_1 = 0,\; \mu_2 = 0$ \\
$t_{1t}^{+}$ & $g_1 = 0,\; g_2 < 0$ & $\mu_1 > 0,\; \mu_2 = 0$ \\
$t_{1x}^{-}$ & $g_1 = 0,\; g_2 < 0$ & $\mu_1 > 0,\; \mu_2 = 0$ \\
$t_{1x}^{+}$ & $g_1 < 0,\; g_2 < 0$ & $\mu_1 = 0,\; \mu_2 = 0$ \\
\midrule
\multicolumn{3}{l}{\textbf{Entry/exit of constraint $g_2$}} \\
$t_{2t}^{-}$ & $g_1 < 0,\; g_2 < 0$ & $\mu_1 = 0,\; \mu_2 = 0$ \\
$t_{2t}^{+}$ & $g_2 = 0,\; g_1 < 0$ & $\mu_1 = 0,\; \mu_2 > 0$ \\
$t_{2x}^{-}$ & $g_2 = 0,\; g_1 < 0$ & $\mu_1 = 0,\; \mu_2 > 0$ \\
$t_{2x}^{+}$ & $g_1 < 0,\; g_2 < 0$ & $\mu_1 = 0,\; \mu_2 = 0$ \\
\bottomrule
\end{tabular}
\end{table}
For simplicity, when there is only one switching point we denote it by $t_s$ rather than $t_{st}$ or $t_{sx}$.

From the standpoint of the Pontryagin Minimum Principle, switching points require special treatment to ensure consistency of the optimality conditions across adjacent arcs. For problems with continuous dynamics and order-zero path constraints, two mathematically equivalent formulations can be employed. The classical approach enforces continuity of both the costate and the Hamiltonian,
\begin{equation}
\lambda(t_s^+) = \lambda(t_s^-), \qquad
H(t_s^+) = H(t_s^-),
\label{eq:jump_conditions_classical}
\end{equation}
while an alternative approach enforces continuity of the state and the costate,
\begin{equation}
x(t_s^+) = x(t_s^-), \qquad
\lambda(t_s^+) = \lambda(t_s^-).
\label{eq:jump_conditions_state}
\end{equation}
Both formulations provide two scalar conditions per switching point and lead to identical optimal solutions. However, the state-based formulation~\eqref{eq:jump_conditions_state} offers practical advantages in numerical implementations, as state variables are directly integrated from the dynamics and exhibit smoother behavior near switching points compared to the Hamiltonian, which involves composite expressions of states, costates, and multipliers.

The following proposition formalizes the connection between these two formulations and establishes conditions under which the state-based approach is sufficient.

\begin{proposition}[State-matching characterization of switching points]
\label{prop:state_matching_switch}
Consider a continuous-time optimal control problem with path constraints of order zero (depending only on $x$, $u$, and $t$). Assume continuous dynamics without impulsive terms and no switching costs. Then, for a prescribed arc sequence, the switching times can be determined by enforcing the jump conditions
\begin{equation}
x(t_s^-) = x(t_s^+), \qquad \lambda(t_s^-) = \lambda(t_s^+),
\label{eq:jump_state_costate}
\end{equation}
together with the constraint activity conditions. Under these assumptions, Hamiltonian continuity follows automatically and does not need to be imposed as an additional equation.
\end{proposition}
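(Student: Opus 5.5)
The plan is to show that, on an optimal trajectory satisfying the PMP conditions on each arc together with \eqref{eq:jump_state_costate} and the constraint activity conditions, the Hamiltonian is continuous at $t_s$. Continuity will come from the stationarity and complementary slackness conditions that already hold on both sides. I work with the linear-quadratic, time-invariant setting adopted in the Assumptions, writing $L=\tfrac12 x^\top Qx+\tfrac12 u^\top Ru$, $f=Ax+Bu$, and $g=Gx+Eu-b$ with order-zero constraints. At a switching point, the multiplier contribution $\mu(t)^\top g$ vanishes on both sides. Indeed, by complementary slackness $\mu_\ell g_\ell=0$ holds at $t_s^-$ and at $t_s^+$ separately, since each one-sided limit of an arc satisfies the arc's conditions. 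Hence
\begin{equation*}
H(t_s^\pm)=L\bigl(x(t_s^\pm),u(t_s^\pm)\bigr)+\lambda(t_s^\pm)^\top f\bigl(x(t_s^\pm),u(t_s^\pm)\bigr).
\end{equation*}
Because $x$ and $\lambda$ are continuous by \eqref{eq:jump_state_costate}, the only quantity that can jump is $u$.

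The next step is to show that $u(t_s^-)=u(t_s^+)$. For this I use the strict convexity of $H$ in $u$ coming from $R\succ0$. At each time, the PMP minimum condition together with $\nabla_uH=0$ and the KKT conditions for the pointwise problem $\min_u \{L+\lambda^\top f : g(x,u)\le0\}$ identifies $u(t)$ as the \emph{unique} minimizer of a strictly convex quadratic in $u$ over the polyhedron $\{u: Gx(t)+Eu\le b\}$. This minimizer depends only on $(x(t),\lambda(t))$. The two one-sided controls $u(t_s^\pm)$ each satisfy the KKT system of this same pointwise problem with the same data $(x(t_s),\lambda(t_s))$: the left limit uses the active set of the incoming arc, the right limit uses that of the outgoing arc, and both satisfy primal feasibility, $\mu\ge0$ and complementarity. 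Since KKT conditions are sufficient for a convex problem and the minimizer is unique, the two limits coincide. It then follows that $L$ and $\lambda^\top f$ are continuous at $t_s$, so $H(t_s^+)=H(t_s^-)$.

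I also need to argue that the jump conditions together with the activity condition determine the switching times, that is, that the equation count matches. For each switching point, \eqref{eq:jump_state_costate} gives $2n$ matching equations for the $2n$ integration constants of the next arc. The activity condition (for example $g(x(t_s),u(t_s^+))=0$ at an entry point, or $\mu(t_s^-)=0$ at an exit point) supplies the single extra scalar equation that fixes $t_s$. This is exactly the role Hamiltonian continuity plays in the classical formulation \eqref{eq:jump_conditions_classical}. The ordinary differential equations on each arc are linear, so the resulting system is well posed for a prescribed arc sequence.

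I expect the main obstacle to be the second step, namely justifying that the one-sided limits of the multipliers and controls exist and satisfy the pointwise KKT system at $t_s$. This is where the order-zero assumption and the absence of impulses are essential. With higher-order state constraints, $\lambda$ itself can jump and $\mu$ can contain measure-valued components, and then the argument fails. I would handle this by noting that on each arc the solution is an explicit exponential in time, hence analytic up to the endpoints, so the limits exist and the arc equations pass to the limit by continuity.
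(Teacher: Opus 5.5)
Your proof is correct and follows the paper's route: the $\mu^\top g$ term drops out at $t_s$, then $u(t_s^-)=u(t_s^+)$ because strict convexity of $H$ in $u$ makes the control a single-valued function of $(x_s,\lambda_s)$, and so $H$ is continuous. Your two sub-steps are slightly tighter than the paper's: complementary slackness on each side handles every constraint, not just the one with $g=0$, and KKT sufficiency for the pointwise strictly convex QP gives uniqueness of $u$ without LICQ. The paper, for its part, justifies the jump conditions themselves ($x$-continuity from the absence of impulses, $\lambda$-continuity from the Weierstrass--Erdmann corner conditions), which you take as given.
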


\begin{proof}
State continuity follows directly from the absence of impulsive dynamics. Costate continuity follows from the Weierstrass--Erdmann corner conditions, which guarantee continuous adjoint variables across switching points for order-zero constraints without switching costs.

At the switching instant, the constraint boundary is reached: $g_i(x_s, u, t_s) = 0$. Since $x_s$ and $\lambda_s$ are identical from both sides, the stationarity condition $\partial H/\partial u = 0$ together with the constraint equation determine a unique control $u_s$ under standard regularity assumptions (strict convexity of $H$ in $u$ and linear independence of active constraint gradients). Thus, $u(t_s^-) = u(t_s^+) = u_s$.

The Hamiltonian is $H = L + \lambda^\top f + \mu^\top g$. At the switching point, $g(x_s, u_s, t_s) = 0$, so the $\mu^\top g$ term vanishes regardless of $\mu$. Since $x_s$, $u_s$, and $\lambda_s$ are all continuous across $t_s$, we have $H(t_s^-) = H(t_s^+)$.
\end{proof}

\begin{remark}
\label{rem:alternative_jump}
An equivalent alternative formulation uses Hamiltonian continuity instead of state continuity:
\begin{equation}
H(t_s^-) = H(t_s^+), \qquad \lambda(t_s^-) = \lambda(t_s^+).
\end{equation}
Both formulations provide two scalar conditions per switching point. This work adopts the state-based formulation~\eqref{eq:jump_state_costate} because the state variables $x$ have direct physical meaning and yield more numerically stable computations, particularly for autonomous systems where the Hamiltonian is constant along the optimal trajectory.

When the assumptions of Proposition~\ref{prop:state_matching_switch} are violated, modified jump conditions apply:
\begin{itemize}
    \item \textbf{Higher-order constraints} (e.g., $g(\dot{x})$ or $g(\ddot{x})$): The costate exhibits a jump discontinuity given by $\Delta\lambda(t_s) = -\mu(t_s)\, \partial g/\partial \dot{x}$.
    \item \textbf{Impulsive dynamics}: The state exhibits a jump discontinuity, and the Hamiltonian-based formulation must be used instead of state matching.
    \item \textbf{Switching costs}: The Hamiltonian exhibits a jump discontinuity given by $\Delta H(t_s) = -\partial c/\partial t_s$, where $c$ denotes the switching cost function.
\end{itemize}
\end{remark}
\section{Multiparametric Continuous-Time Optimal Control}
The continuous-time optimal control problem is formulated in a multiparametric framework where parameters represent either initial states or data uncertainty within a given set. We focus on initial state parametrization, characterizing how optimal control laws, switching times, and arc structures vary with parameters.

Rather than solving individual parameter values repeatedly, the multiparametric approach treats the entire problem family in a unified framework. Each admissible parameter value yields an optimal solution satisfying Pontryagin's optimality conditions. We aim to understand how solutions evolve with parameter variations and identify parameter space regions where solution structure remains qualitatively unchanged.

In continuous-time systems, parameter variations directly affect the active constraint sequence and switching times, creating piecewise optimal solutions whose structure depends on parameter values. By analyzing this dependence explicitly, we can express optimal control laws and state trajectories as explicit parameter functions over parameter subspace regions.

This development extends classical continuous-time optimal control theory through the multiparametric framework, providing a foundation for critical region analysis and explicit solution representations.

\subsection{Parametric Formulation}
In the multiparametric setting, we treat the continuous-time optimal control problem as a family indexed by parameter vector $\theta$, studying how optimal solutions vary across an admissible set rather than solving for fixed parameter values. Throughout this work, $\theta$ represents the initial state of the system, denoted as
\begin{equation}
x(t_0) = \theta \equiv x_0, \qquad \theta \in \Theta \subset \mathbb{R}^n,
\end{equation}
where $\Theta$ denotes a compact set of admissible initial conditions.

The parametric problem then takes the form
\begin{align}
\min_{x(\cdot),\,u(\cdot)} \quad
& J(x,u;\theta) =
\int_{t_0}^{t_f} \ell\big(x(t),u(t)\big)\,dt
+ \phi\big(x(t_f)\big) \\
\text{s.t.}\quad
& \dot{x}(t) = f\big(x(t),u(t)\big), \\
& g\big(x(t),u(t)\big) \le 0, \quad \forall\, t\in[t_0,t_f], \\
& \psi\big(x(t_f)\big) \le 0, \\
& x(t_0) = \theta, \qquad \theta \in \Theta.
\end{align}

For each $\theta \in \Theta$, this formulation yields a standard continuous-time optimal control problem satisfying Pontryagin's optimality conditions. The multiparametric framework simply provides a unified way to represent the entire family of problems across different parameter values without changing the underlying dynamics or constraints.

\subsection{Parametric Switching Structures}
\label{subsec:parametric_switching}
We now examine how the structure of the optimal solution, particularly the switching instants, depends on the initial state $\theta = x_0$. All inequality constraints are written in the compact form
\begin{equation}
\label{eq:path_constraint}
g(x(t),u(t)) \le 0,
\end{equation}
with associated Lagrange multipliers satisfying
\begin{equation}
\label{eq:mu_nonneg}
\mu(t) \ge 0.
\end{equation}

Switching occurs exclusively due to changes in constraint activity. A switching instant is defined as a time at which the active set of constraints changes, or equivalently when at least one Lagrange multiplier transitions between zero and a positive value. The time horizon can therefore be decomposed as
\begin{equation}
t_0 < t_1 < \dots < t_{N_s} < t_f,
\end{equation}
where the number of switching instants $N_s$ depends on the initial state $x_0$. Each switching instant is treated as a function of the initial state,
\begin{equation}
t_s = t_s(x_0), \qquad s=1,\dots,N_s.
\end{equation}

At a switching instant $t_s$, the jump conditions for path-constrained problems without impulsive effects require continuity of the state and adjoint variables,
\begin{equation}
\label{eq:x_jump}
x(t_s^+) = x(t_s^-),
\end{equation}
\begin{equation}
\label{eq:lambda_jump}
\lambda(t_s^+) = \lambda(t_s^-).
\end{equation}

The junction conditions at a switching instant are expressed through constraint activity and the corresponding multiplier behavior. Entry of the constraint associated with switching event $s$ is characterized by
\begin{equation}
\label{eq:entry_pattern}
g(t_s^-) < 0,\ \mu(t_s^-) = 0,
\qquad
g(t_s^+) = 0,\ \mu(t_s^+) > 0,
\end{equation}
whereas exit of the constraint is characterized by
\begin{equation}
\label{eq:exit_pattern}
g(t_s^-) = 0,\ \mu(t_s^-) > 0,
\qquad
g(t_s^+) < 0,\ \mu(t_s^+) = 0.
\end{equation}
All remaining constraints satisfy feasibility and complementary slackness on both sides of the switching instant.

A parametric switching structure is defined as a fixed ordered sequence of active constraint sets over the time horizon. A critical region $\mathcal{R}\subset\Theta$ is a subset of the initial-state space such that for every $x_0\in\mathcal{R}$ the same switching structure is observed and the jump conditions \eqref{eq:x_jump}--\eqref{eq:lambda_jump} and junction conditions \eqref{eq:entry_pattern}--\eqref{eq:exit_pattern} remain valid.

Since the number and locations of switching instants are not known beforehand, a structure-driven procedure is adopted. For a fixed switching structure, exact switching times $t_s(x_0)$ are computed for selected initial states by solving the optimality system together with the jump and junction conditions. These exact values are then used to construct empirical approximations $\hat{t}_s(x_0)$ describing the dependence of switching instants on the initial state within the same structure.

The procedure is summarized in Algorithm~\ref{alg:param_switch}.

\begin{algorithm}[t]
\caption{Identification and fitting of parametric switching times}
\label{alg:param_switch}
\hrule
\vspace{0.5em}
\textbf{Input:} Switching structure $\mathcal{S}$ identified at a seed initial state $x_0^\star$, and sampled initial states $\{x_0^{(r)}\}_{r=1}^{N_{\mathcal{S}}}$ restricted to the domain where $\mathcal{S}$ is observed \\
\textbf{Output:} Empirical switching-time functions $\{\hat{t}_s(x_0)\}_{s=1}^{N_s}$ associated with $\mathcal{S}$ \\
\textbf{Indexing:} $r$ indexes sampled initial states; $s$ indexes switching events
\vspace{0.5em}
\hrule
\begin{enumerate}
\item Solve the continuous-time optimality conditions at $x_0^\star$ to identify the switching structure $\mathcal{S}$ and the ordered switching events $s=1,\dots,N_s$.
\item For each sample $r=1,\dots,N_{\mathcal{S}}$:
\begin{enumerate}
\item Solve the optimality system consistent with $\mathcal{S}$ at $x_0^{(r)}$.
\item For each switching event $s=1,\dots,N_s$, compute the exact switching time $t_s^{(r)} = t_s(x_0^{(r)})$ by enforcing the jump conditions \eqref{eq:x_jump}--\eqref{eq:lambda_jump} and the junction conditions \eqref{eq:entry_pattern} or \eqref{eq:exit_pattern}.
\item Retain $t_s^{(r)}$ if all referenced conditions are satisfied.
\end{enumerate}
\item For each switching event $s=1,\dots,N_s$, fit an empirical function $\hat{t}_s(x_0)$ using the retained sample pairs $\{(x_0^{(r)},t_s^{(r)})\}$.
\item \textbf{return} $\{\hat{t}_s(x_0)\}_{s=1}^{N_s}$.
\end{enumerate}
\hrule
\end{algorithm}
\begin{lemma}[Existence, uniqueness, and continuity of switching time]
\label{lem:switching_continuity}
For a fixed switching structure $\mathcal{S}$ with a single switching event, there exists a unique switching time $t_s(x_0) \in (t_0, t_f)$ for each $x_0 \in \mathcal{R}$, and the switching time function $t_s: \mathcal{R} \to [t_0, t_f]$ is continuous over the interior of the critical region $\mathcal{R}$.
\end{lemma}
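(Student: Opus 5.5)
Existence comes from the definition of the critical region, uniqueness from strict convexity of the linear-quadratic problem, and continuity from the implicit function theorem applied to the switching equation.

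\textbf{Existence.} Fix $x_0\in\mathcal{R}$. By the definition of a critical region, the switching structure $\mathcal{S}$ is observed at $x_0$. Since $\mathcal{S}$ has exactly one switching event, the optimal trajectory has exactly one instant in $(t_0,t_f)$ at which the active set changes. That instant is a valid $t_s(x_0)$ and satisfies the jump conditions \eqref{eq:x_jump}--\eqref{eq:lambda_jump} and the entry or exit pattern \eqref{eq:entry_pattern}--\eqref{eq:exit_pattern}.

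\textbf{Uniqueness.} Under the standing assumptions (linear dynamics, linear constraints, $Q\succeq 0$, $R\succ 0$), the problem is strictly convex in $u(\cdot)$. Hence the optimal control is unique in $L^2$, and so are the state and costate trajectories. Suppose two times $t_s\neq t_s'$ both satisfied the optimality system with structure $\mathcal{S}$. Each would generate a PMP extremal, and for this convex problem PMP is sufficient, so both extremals would be optimal. Uniqueness of the optimum forces the two trajectories to coincide. The active set then changes at only one instant, so $t_s=t_s'$. I would also remark that $t_s\in(t_0,t_f)$ is strict, because a switch at an endpoint would change the arc sequence.

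\textbf{Continuity.} Fix an interior point $\bar x_0$ of $\mathcal{R}$. On each arc the PMP system is a linear Hamiltonian ODE in $(x,\lambda)$, with $u$ and $\mu$ affine in $(x,\lambda)$. Its solution is therefore $e^{\mathcal{H}_i (t-\tau)}$ applied to the boundary data, which is analytic in time and affine in the data. Propagating the first arc from $(x_0,\lambda(t_0))$ and the second arc backward from the transversality condition, and imposing \eqref{eq:x_jump}--\eqref{eq:lambda_jump}, gives a finite system
\[
F\big(t_s,\lambda(t_0),\nu;x_0\big)=0 .
\]
This system consists of the matching conditions together with the scalar junction equation. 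The junction equation is $g(x(t_s),u(t_s))=0$ at an entry, or $\mu(t_s^-)=0$ at an exit. $F$ is smooth, indeed real-analytic, in all its arguments. For fixed $t_s$, the matching conditions are linear in $(\lambda(t_0),\nu)$ and uniquely solvable, because the unconstrained LQ two-point boundary value problem with $R\succ0$ is well posed. Eliminating these unknowns leaves a single scalar equation $G(t_s;x_0)=0$, with $G$ smooth.

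\textbf{Main obstacle.} The implicit function theorem applies at $\bar x_0$ provided $\partial G/\partial t_s\neq 0$ there, and this is the step I expect to be hardest. This derivative equals the time derivative of $g$, or of $\mu$, along the trajectory at the junction. It is exactly the transversality, or nontangential crossing, of the constraint boundary. For points in the interior of $\mathcal{R}$ I would argue by contradiction. If $\partial G/\partial t_s=0$, the crossing is tangential, and arbitrarily small perturbations of $x_0$ would either create an additional touching point or remove the arc. Either outcome changes the switching structure and contradicts $\bar x_0\in\operatorname{int}\mathcal{R}$.

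If this argument does not go through rigorously, I would add a strict-complementarity or nondegeneracy assumption at the junction as a hypothesis. That is the standard regularity condition in the literature of \cite{sakizlis2005explicit}. Under it, the implicit function theorem yields a continuous (indeed $C^1$) local solution $t_s(x_0)$. By uniqueness this local solution coincides with the switching time, which proves continuity on $\operatorname{int}\mathcal{R}$.
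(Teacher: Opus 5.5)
Your existence step matches the paper's. Your uniqueness step is a legitimate alternative route. The paper (Step~3) only observes that a second zero of $\mu$ would contradict a single-switch structure. You instead use strict convexity and sufficiency of the PMP conditions, which also makes $x_0\mapsto t_s(x_0)$ well defined regardless of which extremal is chosen.

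\textbf{The gap is in the continuity step}, exactly where you suspected. Your implicit-function argument needs $\partial G/\partial t_s\neq 0$ at every point of $\mathrm{int}\,\mathcal{R}$, and your contradiction argument does not deliver it. Consider the switching quantity $\phi=g(x,u)+g_uR^{-1}g_u^\top\mu$: it is a positive multiple of $\mu$ on the constrained arc and equals $g$ on the free arc. A tangential junction means $\phi$ has a zero of multiplicity greater than one at $t_s$. Nothing forces such a zero to split into several zeros, or to disappear, under every small change of $x_0$. The map $x_0\mapsto\phi(\cdot;x_0)$ is a specific $n$-parameter family, not a generic perturbation. For instance, take an unfolding of the schematic form $-(t-\bar t_s)^3-a(x_0)(t-\bar t_s)$ with $a\ge 0$ and $a(\bar x_0)=0$. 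It keeps exactly one crossing, so the structure is unchanged and $\bar x_0$ is interior, yet the implicit function theorem is unavailable there. Falling back on a strict-complementarity or nondegeneracy hypothesis proves a weaker lemma than the one stated.

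\textbf{The paper's version of this step does not help.} In its Step~4 it claims that $\mu(t_s)=\dot\mu(t_s)=0$ forces all derivatives of the exponential sum $\sum_i\alpha_i e^{\sigma_i t}$ to vanish. That inference is not valid, since exponential sums can vanish to finite order, e.g.\ $(e^{t-t_s}-1)^3$. Its Step~5 also treats the $\alpha_i$ as functions of $x_0$ alone, although they are fixed by conditions at $t_s$. Your scalar reduction $G(t_s;x_0)$ is actually the more careful formulation.

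\textbf{How to close the gap.} Drop the implicit function theorem for continuity and use compactness together with the uniqueness you already proved.
\begin{enumerate}
\item Take $x_0^k\to\bar x_0\in\mathrm{int}\,\mathcal{R}$ and any limit point $\tau\in[t_0,t_f]$ of $t_s(x_0^k)$.
\item For each fixed switching time your linear system is uniquely solvable. Its coefficient matrix is continuous in the switching time and invertible on the compact interval $[t_0,t_f]$. Hence the solution is continuous in $(t_s,x_0)$.
\item Along a subsequence, the extremals converge to the structure-$\mathcal{S}$ solution at $(\tau,\bar x_0)$. The weak sign conditions ($\mu\ge 0$ on the constrained arc, $g\le 0$ on the free arc) pass to the limit.
\item That limit is therefore a PMP extremal at $\bar x_0$, hence optimal, hence equal to the unique optimum.
\item Since $g_u\neq 0$, $\lambda$ and $\mu$ are unique given $(x,u)$. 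The optimum has $\mu>0$ strictly before $\bar t_s$ and $g<0$ strictly after it (for an exit; symmetrically for an entry). So $\tau=\bar t_s$, i.e.\ $t_s(x_0^k)\to\bar t_s$.
\end{enumerate}
Your transversality condition $\partial G/\partial t_s\neq 0$ is then needed only to upgrade continuity to $C^1$, which the lemma does not claim.
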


\begin{proof}
\textbf{Step 1: Existence.}
For each $x_0 \in \mathcal{R}$, the switching time $t_s(x_0)$ is defined as the time when the Lagrange multiplier reaches zero:
\begin{equation}
\mu(t_s, x_0) = 0.
\end{equation}
For linear-quadratic problems, the multiplier $\mu(t, x_0)$ is obtained by solving the coupled system of linear ordinary differential equations governing the state and costate, with initial conditions determined by $x_0$. Since the critical region $\mathcal{R}$ is defined by a fixed switching structure, a switching time exists for each $x_0 \in \mathcal{R}$ by construction.

\textbf{Step 2: Form of the multiplier.}
For linear systems, the state and costate evolve according to the Hamiltonian system, which is a $2n$-dimensional linear ordinary differential equation. The multiplier $\mu(t, x_0)$ is determined algebraically from the stationarity condition and can therefore be expressed as a linear combination of the fundamental solutions:
\begin{equation}
\mu(t, x_0) = \sum_{i=1}^{2n} \alpha_i(x_0) e^{\sigma_i t},
\end{equation}
where $\sigma_i$ are the $2n$ eigenvalues of the Hamiltonian system matrix and $\alpha_i(x_0)$ are coefficients that depend continuously on the initial state $x_0$. This exponential form follows directly from the linearity of the state-costate system.

\textbf{Step 3: Uniqueness within the critical region.}
Within a critical region $\mathcal{R}$, the switching structure is fixed by definition. For a single switching event, the constraint transitions from active to inactive (or vice versa) exactly once.

Suppose $\mu(t, x_0) = 0$ had multiple solutions in $(t_0, t_f)$. Then the multiplier would cross zero multiple times, implying multiple constraint activations or deactivations. This would contradict the assumption of a fixed switching structure with a single switching event.

Therefore, for each $x_0 \in \mathcal{R}$, there exists a unique $t_s(x_0) \in (t_0, t_f)$ satisfying $\mu(t_s(x_0), x_0) = 0$.

\textbf{Step 4: Non-vanishing time derivative.}
At the switching time $t_s$, the constraint transitions between active and inactive states. We show that $\frac{\partial \mu}{\partial t}(t_s, x_0) \neq 0$.

From Step~2, we have $\mu(t, x_0) = \sum_{i=1}^{2n} \alpha_i(x_0) e^{\sigma_i t}$, which is smooth and has a well-defined derivative at every point. Suppose, for the sake of contradiction, that $\frac{\partial \mu}{\partial t}(t_s, x_0) = 0$. Since $\mu(t_s, x_0) = 0$ by definition of the switching time, this gives:
\begin{equation}
\mu(t_s) = 0 \quad \text{and} \quad \frac{d\mu}{dt}(t_s) = 0.
\end{equation}

Consider the Taylor expansion of $\mu$ around $t_s$:
\begin{equation}
\mu(t) = \mu(t_s) + \frac{d\mu}{dt}(t_s)(t - t_s) + O\big((t - t_s)^2\big) = O\big((t - t_s)^2\big).
\end{equation}
This implies that $\mu(t)$ remains near zero with second-order smallness in a neighborhood of $t_s$. However, within the critical region $\mathcal{R}$, the switching structure requires that on one side of $t_s$ the constraint is active (with $\mu > 0$), and on the other side it is inactive (with $\mu = 0$ by complementarity).

If $\frac{d\mu}{dt}(t_s) = 0$, then $\mu$ cannot transition immediately from a positive value to zero. It would instead hover near zero, requiring higher-order derivatives to vanish as well. But from the exponential form $\mu(t, x_0) = \sum_{i=1}^{2n} \alpha_i(x_0) e^{\sigma_i t}$, if $\mu$ and all its derivatives vanish at $t_s$, then $\mu \equiv 0$ for all $t$, contradicting the existence of an active constraint phase.

Therefore, $\frac{\partial \mu}{\partial t}(t_s, x_0) \neq 0$.

\textbf{Step 5: Continuity.}
Since $\mu(t, x_0)$ is continuously differentiable in both arguments and $\frac{\partial \mu}{\partial t}(t_s, x_0) \neq 0$, the Implicit Function Theorem \cite{luenberger1997optimization} guarantees that there exists a unique continuous function $t_s(x_0)$ in a neighborhood of any $x_0^* \in \mathrm{int}(\mathcal{R})$ satisfying $\mu(t_s(x_0), x_0) = 0$. Since this holds for every point in the interior of $\mathcal{R}$, the switching time function $t_s: \mathcal{R} \to [t_0, t_f]$ is continuous over $\mathrm{int}(\mathcal{R})$.
\end{proof}

\begin{remark}
\label{rem:weierstrass}
The continuity established in Lemma~\ref{lem:switching_continuity} has important practical implications for the representation of switching-time functions. By the Weierstrass Approximation Theorem, any continuous function on a compact set can be uniformly approximated by polynomials to arbitrary precision. Since critical regions are bounded subsets of the parameter space and $t_s(x_0)$ is continuous over each region, polynomial approximations are theoretically guaranteed to converge to the true switching-time function.

When an analytical expression is available, it should be used directly. When analytical derivation is intractable, the Weierstrass theorem justifies the empirical fitting procedure in Algorithm~\ref{alg:param_switch}. In practice, low-degree polynomials achieve high accuracy over the relatively narrow parameter ranges of individual critical regions. Alternative approximation schemes---such as rational functions, splines, or other basis functions---may be employed depending on the complexity of the switching-time dependence and the desired balance between accuracy and computational simplicity.
\end{remark}
\subsection{Critical Regions and Explicit Solutions}
\label{subsec:critical_regions}

The parametric switching structures identified in the previous subsection induce a partition of the initial-state space into subsets within which the qualitative structure of the optimal solution remains unchanged. These subsets are referred to as \emph{critical regions} and provide the basis for constructing explicit parametric solutions.

For a fixed switching structure, a critical region is the set of initial states $x_0 \in \Theta$ for which the same constraints remain active and inactive over the entire time horizon. In practice, this is enforced by two simple criteria that mirror the discrete-time definition of critical regions (for example, in multiparametric quadratic programming or explicit MPC), with the only difference being that the conditions must hold over a continuous-time horizon:
\begin{itemize}
    \item \textbf{Inactive constraints remain inactive.}
    \item \textbf{Active Lagrange multipliers remain positive.}
\end{itemize}

Accordingly, a critical region $\mathcal{R}$ is characterized by parametric inequalities evaluated along the optimal trajectory. Let $\tilde{g}(\cdot)$ denote the vector of inactive inequality constraints, and let $\tilde{\mu}(\cdot)$ and $\tilde{\nu}(\cdot)$ denote the Lagrange multipliers associated with active constraints and their entry points, respectively. The interior of a critical region is defined by
\begin{align}
\tilde{g}\!\big(x(t,t_s(x_0),x_0),u(t,t_s(x_0),x_0),x_0\big) &< 0, \label{eq:cr_inactive}\\
\tilde{\mu}\!\big(t,t_s(x_0),x_0\big) &> 0, \label{eq:cr_active_mu}\\
\tilde{\nu}\!\big(t_s(x_0),x_0\big) &> 0. \label{eq:cr_entry}
\end{align}

The inequalities \eqref{eq:cr_inactive}--\eqref{eq:cr_entry} are time-dependent. However, to obtain an explicit description of a critical region in the initial-state space, it is sufficient to enforce them at their most restrictive points in time. This yields explicit parametric expressions that define the region boundaries.

For each inactive constraint $i$, the boundary is given by
\begin{equation}
\bar{G}_i(x_0) =
\max_{t \in [t_0,t_f]}
\tilde{g}_i\!\big(x,u,x_0),x_0\big),
\label{eq:inactive_boundary}
\end{equation}
and the condition $\bar{G}_i(x_0) < 0$ defines the interior of the region with respect to that constraint.

For constraints that are active over at least one constrained arc $[t_{i,s}^-(x_0),t_{i,s}^+(x_0)]$, the inactive portions of the horizon must also satisfy
\begin{equation}
\bar{G}_i(x_0) =
\max_{\substack{t \in [t_0,t_f] \\ t \notin [t_{i,s}^-(x_0),t_{i,s}^+(x_0)]}}
\tilde{g}_i\!\big(x,u,x_0\big),
\label{eq:mixed_boundary}
\end{equation}

Finally, for each active constraint $i$, positivity of the associated Lagrange multiplier at all entry points must be preserved, which yields the boundary condition
\begin{equation}
\bar{\mu}_i(x_0) =
\min_{\substack{t \in [t_{i,s}^-(x_0),t_{i,s}^+(x_0)]}}\, \mu_i\!\big(x,u,x_0\big), 
\label{eq:multiplier_boundary}
\end{equation}

Equations \eqref{eq:inactive_boundary}--\eqref{eq:multiplier_boundary} provide explicit parametric expressions for the \emph{boundaries} of the critical regions. Crossing any of these boundaries implies that at least one previously inactive constraint becomes active (or an active multiplier loses positivity), which changes the switching structure and therefore leads to a different explicit solution.

Within a critical region, the switching structure is fixed and the optimality system admits a uniform representation. As a result, the optimal control, state, and adjoint trajectories can be expressed explicitly as functions of time and the initial state. The optimal control law takes the piecewise form
\begin{equation}
u^\ast(t,x_0)=
\begin{cases}
u_1(t,x_0), & t \in [t_0,t_1(x_0)),\\
u_2(t,x_0), & t \in [t_1(x_0),t_2(x_0)),\\
\vdots & \\
u_{N_s+1}(t,x_0), & t \in [t_{N_s}(x_0),t_f],
\end{cases}
\end{equation}
where each arc-wise expression is obtained from the Hamiltonian minimization condition under the fixed active set associated with the region.

\begin{algorithm}[t]
\caption{Solution of the multiparametric dynamic optimization problem}
\label{alg:mpdo}
\hrule
\vspace{0.5em}
\textbf{Input:} Parameter domain $\Theta$, initial parameter seed $\theta^{(0)} \in \Theta$ \\
\textbf{Output:} Set of critical regions and exact parametric solutions
\vspace{0.5em}
\hrule
\begin{enumerate}
\item Initialize the unexplored parameter set $\Theta_{\mathrm{rest}} \leftarrow \Theta$.
\item \textbf{While} $\Theta_{\mathrm{rest}} \neq \emptyset$ \textbf{do}:
\begin{enumerate}
\item Select a fixed parameter point $\theta^{(i)} \in \Theta_{\mathrm{rest}}$.
\item Solve the dynamic optimization problem at $\theta^{(i)}$ to identify the optimal arc structure, active constraints, and switching points.
\item Derive exact analytical expressions for the state, adjoint, control, and multipliers corresponding to this structure.
\item Construct symbolic boundaries of the critical regions by enforcing feasibility, complementarity, and multiplier sign conditions using \eqref{eq:inactive_boundary}--\eqref{eq:multiplier_boundary}.
\item Define the associated critical region $\mathcal{R}_i \subset \Theta$ as the set of parameters satisfying these conditions.
\item Remove the identified region from the unexplored set: $\Theta_{\mathrm{rest}} \leftarrow \Theta_{\mathrm{rest}} \setminus \mathcal{R}_i$.
\end{enumerate}
\item \textbf{return} the collection of critical regions and their exact parametric solutions.
\end{enumerate}
\hrule
\end{algorithm}

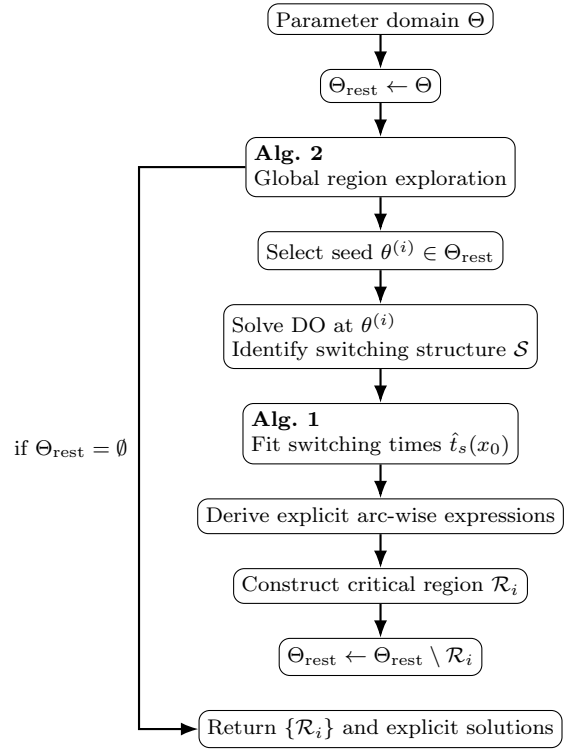
\begin{figure}[t]
\centering
\scriptsize
\setlength{\tabcolsep}{3pt}
\begin{tikzpicture}[
  node distance=4.5mm and 9mm,
  box/.style={rectangle, draw, rounded corners, align=left, inner sep=3pt},
  startstop/.style={rectangle, draw, rounded corners, align=center, inner sep=3pt},
  arrow/.style={-Latex, thick}
]

\node[startstop] (start) {Parameter domain $\Theta$};
\node[box, below=of start] (init) {$\Theta_{\mathrm{rest}} \leftarrow \Theta$};

\node[box, below=of init] (outer) {\textbf{Alg.~\ref{alg:mpdo}}\\
Global region exploration};

\node[box, below=of outer] (seed) {Select seed $\theta^{(i)} \in \Theta_{\mathrm{rest}}$};
\node[box, below=of seed] (solve) {Solve DO at $\theta^{(i)}$\\
Identify switching structure $\mathcal{S}$};

\node[box, below=of solve] (inner) {\textbf{Alg.~\ref{alg:param_switch}}\\
Fit switching times $\hat{t}_s(x_0)$};

\node[box, below=of inner] (explicit) {Derive explicit arc-wise expressions};
\node[box, below=of explicit] (region) {Construct critical region $\mathcal{R}_i$};
\node[box, below=of region] (remove) {$\Theta_{\mathrm{rest}} \leftarrow \Theta_{\mathrm{rest}}\setminus\mathcal{R}_i$};

\node[startstop, below=of remove] (end) {Return $\{\mathcal{R}_i\}$ and explicit solutions};

\draw[arrow] (start) -- (init);
\draw[arrow] (init) -- (outer);
\draw[arrow] (outer) -- (seed);
\draw[arrow] (seed) -- (solve);
\draw[arrow] (solve) -- (inner);
\draw[arrow] (inner) -- (explicit);
\draw[arrow] (explicit) -- (region);
\draw[arrow] (region) -- (remove);

\draw[arrow] (outer.west) -- ++(-14mm,0) |- 
node[pos=0.25, left, xshift=-1mm, fill=white, inner sep=1pt]
{if $\Theta_{\mathrm{rest}}=\emptyset$} (end.west);

\end{tikzpicture}
\caption{Interaction between Alg.~\ref{alg:mpdo} (global exploration) and Alg.~\ref{alg:param_switch} (local switching-time fitting).}
\label{fig:alg_interaction}
\end{figure}
Figure~\ref{fig:alg_interaction} shows how the two algorithms interact: Algorithm~\ref{alg:mpdo} drives the global exploration, calling Algorithm~\ref{alg:param_switch} locally to fit switching times for each identified structure.

\section{Illustrative Examples}

This section presents two illustrative examples to demonstrate the formulation and solution of multiparametric dynamic optimization problems in both continuous-time (CT) and discrete-time (DT) settings. For each example, the continuous-time problem is first solved analytically, leading to an explicit partition of the parameter space into critical regions with region-specific optimal trajectories. Once the critical region corresponding to a given initial condition is identified, the optimal state and control trajectories are obtained by direct evaluation of the associated expressions, with switching handled through region-dependent switching times when applicable.

All continuous-time computations are performed symbolically using \texttt{SymPy}, with switching-time expressions evaluated via \texttt{lambdify}. When closed-form expressions are not retained, sampled solutions are post-processed using \texttt{NumPy} to construct low-degree polynomial approximations for reporting. The discrete-time counterparts are formulated as multiparametric quadratic programs and solved using the \texttt{PPOPT} toolbox \cite{kenefake2022ppopt}. Specifically, we employ the combinatorial parallel algorithm implemented in \texttt{PPOPT}, which systematically explores the space of active sets to obtain the corresponding critical-region partitions and affine control laws.

\subsection{Example 1: One-State Integrating System with Mixed Output and Input Constraints}
\label{subsec:ex1}

This example illustrates the formulation and solution of a multiparametric dynamic optimization problem for a simple continuous-time system with path constraints.

\subsubsection{Problem Statement}
\label{subsubsec:ex1_problem}

\begin{equation}
\label{eq:ex1_ocp}
\begin{aligned}
\min_{u(\cdot)}\quad
& \frac{1}{2}x(T)^2 + \frac{1}{2}\int_{0}^{T}\Big(x(t)^2 + u(t)^2\Big)\,dt
\\
\text{s.t.}\quad
& \dot{x}(t) = -u(t), \qquad t\in[0,T],
\\
& y(t) = x(t) + u(t), \qquad t\in[0,T],
\\
& -1 \le y(t) \le 1, \qquad t\in[0,T],
\\
& u(t) \le 2, \qquad t\in[0,T],
\\
& x(0)=x_0,\qquad -2 \le x_0 \le 2,
\\
& T=2.
\end{aligned}
\end{equation}
The initial state $x_0$ is the parameter, and the aim is to derive an explicit representation of the optimal control law over $\Theta$. The construction is presented below.
\subsubsection{Continuous-Time Multiparametric Solution}
\label{subsubsec:ex1_ct_mpdo}

The initial state $x_0$ is treated as the scalar parameter on the domain
\[
\Theta := \{x_0\in\mathbb{R}\mid -2\le x_0 \le 2\}.
\]
The objective is to derive an explicit parametric description of the optimal trajectories and their switching structure over $\Theta$.

\medskip
\noindent
\emph{Iteration 1.}

\begin{enumerate}
\item[(1)] Initialize the unexplored parameter set:
\[
\Theta_{\mathrm{rest}} \leftarrow \Theta.
\]
\item[(2a)] Fix a seed parameter in the unexplored set:
\[
x_0^{(0)} := 0 \in \Theta_{\mathrm{rest}}.
\]
\item[(2b)] At the seed point $x_0^{(0)}=0$, the optimality conditions 
yield a trajectory that satisfies all path and input 
constraints throughout $[0,2]$, so no switching points 
arise and the solution is a single unconstrained arc.
\item[(2c)] Derive the exact analytical expressions for the state, control, adjoint, and constraint-related quantities corresponding to this arc. The Hamiltonian is
\[
H=\tfrac{1}{2}(x^2+u^2)+\lambda(-u),
\]
which gives the stationarity condition $u(t)=\lambda(t)$. The state and adjoint equations are
\[
\dot{x}(t)=-u(t), \qquad \dot{\lambda}(t)=-x(t),
\]
with terminal condition $\lambda(2)=x(2)$. Solving yields
\[
x(t)=x_0 e^{-t},\,
u(t)=x_0 e^{-t},\,
\lambda(t)=x_0 e^{-t},\, t\in[0,2].
\]
The path constraint function is
\[
y(t)=x(t)+u(t),
\]
and since the constraint is inactive on this arc, the corresponding multiplier satisfies
\[
\mu(t)=0 \quad \forall t\in[0,2].
\]
\item[(2d)] Enforce feasibility conditions symbolically over the horizon. The output is
\[
y(t)=x(t)+u(t)=2x_0 e^{-t}.
\]
Requiring $-1\le y(t)\le 1$ for all $t\in[0,2]$ gives
\[
-1 \le 2x_0 e^{-t} \le 1.
\]
Since $e^{-t}\in(0,1]$, feasibility reduces to
\[
-1 \le 2x_0 \le 1 \quad \Longleftrightarrow \quad -0.5 \le x_0 \le 0.5.
\]
\item[(2e)] Define the associated critical region:
\[
\mathrm{CR03}_{\mathrm{CT}} = \{x_0\in\Theta \mid -0.5 \le x_0 \le 0.5\},
\]
with a single unconstrained arc on $[0,2]$.
\item[(2f)] Update the unexplored parameter set:
\[
\Theta_{\mathrm{rest}} \leftarrow [-2,-0.5)\cup(0.5,2].
\]
\item[(3)] Since $\Theta_{\mathrm{rest}}\neq\emptyset$, proceed to the next iteration.
\end{enumerate}

\medskip
\noindent
\emph{Iteration 2.}

\begin{enumerate}
\item[(2a)] Fix a new seed parameter in the unexplored set:
\[
x_0^{(1)} := -0.8 \in \Theta_{\mathrm{rest}}.
\]
\item[(2b)] At $x_0^{(1)} = -0.8$, the optimality conditions show that the unconstrained solution violates the lower output bound at $t=0$. The optimal trajectory therefore has two arcs: 
an initial constrained arc with the lower bound active, followed by an unconstrained arc after the constraint is released at a single switching point.
\item[(2c)] Derive exact analytical expressions for the state, control, and constraint-related quantities for the identified two-arc structure. Define the lower output constraint as
\[
g(t) = -x(t)-u(t)-1 \leq 0.
\]
On the first arc the constraint is active, hence
\[
g(t)=0, \qquad \mu(t)\geq 0.
\]
which implies
\[
u(t)=-1-x(t), \qquad \dot{x}(t)=1+x(t).
\]
With $x(0)=x_0$, the constrained-arc solution is
\[
x_0(t)=(x_0+1)e^{t}-1, \,
u_0(t)=-(x_0+1)e^{t}, \, t\in[0,t_s].
\]
On the second arc the constraint is inactive, so that
\[
g(t)\leq 0, \qquad \mu(t)=0.
\]
The unconstrained optimal feedback applies,
\[
u(t)=x(t), \qquad \dot{x}(t)=-x(t),
\]
yielding, by continuity at $t_s$,
\[
x_1(t)=x(t_s)e^{-(t-t_s)}, \,
u_1(t)=x(t_s)e^{-(t-t_s)}, \, t\in[t_s,2].
\]
\item[(2d)] Enforce the symbolic boundary and junction conditions. The switching time $t_s$ corresponds to leaving the active constraint. At this point the trajectory must satisfy the constraint boundary and be compatible with the unconstrained feedback law.

On the unconstrained arc, $u(t)=x(t)$ implies
\[
g(t)=-x(t)-u(t)-1=-2x(t)-1.
\]
Enforcing $g(t_s)=0$ therefore yields
\[
-2x(t_s)-1=0
\quad\Longrightarrow\quad
x(t_s)=-\frac{1}{2}.
\]
Substituting the constrained-arc expression gives
\[
(x_0+1)e^{t_s}-1=-\frac{1}{2}
\quad\Longrightarrow\quad
t_s(x_0)=\ln\!\Big(\frac{1}{2(x_0+1)}\Big).
\]
Requiring $t_s(x_0)\in[0,2]$ yields
\[
-1+\frac{1}{2e^2} \le x_0 \le -\frac{1}{2}.
\]

\item[(2e)] Define the associated critical region:
\[
\mathrm{CR02}_{\mathrm{CT}} =
\left\{x_0\in\Theta \,\middle|\,
-1+\frac{1}{2e^2} \le x_0 \le -0.5
\right\}.
\]
\item[(2f)] Remove this region from the unexplored set. The remaining portion of the negative side is
\[
[-2,-0.5]\setminus \mathrm{CR02}_{\mathrm{CT}} = [-2,-1+\tfrac{1}{2e^2}).
\]
\item[(3)] At this stage, two critical regions have been constructed explicitly: $\mathrm{CR03}_{\mathrm{CT}}$ and $\mathrm{CR02}_{\mathrm{CT}}$.
\end{enumerate}
By symmetry, the positive half of the parameter space yields analogous regions with opposite signs. The complete procedure identifies five critical regions. The active-arc structures associated with the continuous-time critical regions are summarized in Table~\ref{tab:ct_active_sets_all}.

\begin{table}[t]
\centering
\caption{Active-arc structures of the continuous-time critical regions and their corresponding parameter intervals.}
\label{tab:ct_active_sets_all}
\begin{tabular}{lll}
\hline
Region & $x_0$ interval & Arc structure \\
\hline
CR01$_{\mathrm{CT}}$ & $[-1.2707,\,-0.9323]$ & $y_{\min}$ active (full horizon) \\
CR02$_{\mathrm{CT}}$ & $[-0.9323,\,-0.5]$ & $y_{\min}$ active $\rightarrow$ unconstrained \\
CR03$_{\mathrm{CT}}$ & $[-0.5,\,0.5]$ & unconstrained (full horizon) \\
CR04$_{\mathrm{CT}}$ & $[0.5000,\,0.9323]$ & $y_{\max}$ active $\rightarrow$ unconstrained \\
CR05$_{\mathrm{CT}}$ & $[0.9323,\,2]$ & $y_{\max}$ active (full horizon) \\
\hline
\end{tabular}
\end{table}
These structures identify which constraints are active over the time horizon for each parameter interval. Given the active-arc structure of each region, the corresponding optimal trajectories can be derived in closed form. The resulting closed-form expressions for the optimal state and control trajectories are reported in Table~\ref{tab:CT_explicit}.

\newcolumntype{Y}{>{\raggedright\arraybackslash}X}

\begin{table}[t]
\centering
\caption{Explicit control and state functions for continuous-time regions.}
\label{tab:CT_explicit}

\small                    
\setlength{\tabcolsep}{3pt}
\renewcommand{\arraystretch}{1.1}

\begin{tabularx}{\columnwidth}{@{} l c Y @{}}
\toprule
Region & Inequality & Control and state functions \\
\midrule

CR01$_{\mathrm{CT}}$
& $-1.2707 \le x_0 \le -0.9323$
& $0 \le t \le T$\\
&& $u(t)=(-x_0-1)e^{t}$\\
&& $x(t)=(x_0+1)e^{t}-1$
\\[1mm]

CR02$_{\mathrm{CT}}$
& $-0.9323 \le x_0 \le -0.5$
& $0 \le t \le t_s$\\
&& $u_0(t)=(-x_0-1)e^{t}$\\
&& $x_0(t)=(x_0+1)e^{t}-1$\\
&& $t_s \le t \le T$\\
&& $u_1(t)=-\tfrac{1}{2}e^{-t+t_s}$\\
&& $x_1(t)=-\tfrac{1}{2}e^{-t+t_s}$
\\[1mm]

CR03$_{\mathrm{CT}}$
& $-0.5 \le x_0 \le 0.5$
& $0 \le t \le T$\\
&& $u(t)=x_0 e^{-t}$\\
&& $x(t)=x_0 e^{-t}$
\\[1mm]

CR04$_{\mathrm{CT}}$
& $0.5 \le x_0 \le 0.9323$
& $0 \le t \le t_s$\\
&& $u_0(t)=(1-x_0)e^{t}$\\
&& $x_0(t)=(x_0-1)e^{t}+1$\\
&& $t_s \le t \le T$\\
&& $u_1(t)=\tfrac{1}{2}e^{-t+t_s}$\\
&& $x_1(t)=\tfrac{1}{2}e^{-t+t_s}$
\\[1mm]

CR05$_{\mathrm{CT}}$
& $0.9323 \le x_0 \le 2$
& $0 \le t \le T$\\
&& $u(t)=(1-x_0)e^{t}$\\
&& $x(t)=(x_0-1)e^{t}+1$
\\

\bottomrule
\end{tabularx}
\end{table}

\medskip
\noindent
\emph{Switching-time identification on $\mathrm{CR02}_{\mathrm{CT}}$.}

The switching-time function $t_s(x_0)$ associated with $\mathrm{CR02}_{\mathrm{CT}}$ is identified following the procedure outlined in Algorithm~\ref{alg:param_switch}. Although an analytical expression for the switching time can be derived for this example, such a closed-form relationship is not generally available in multiparametric dynamic optimization problems. In more complex systems, switching instants may be defined implicitly through nonlinear junction and jump conditions, making direct analytical derivation infeasible. For this reason, a sampling-based identification and fitting strategy is employed.

\begin{enumerate}
\item[(1)] Identify the switching structure at a representative seed. A representative initial state $x_0^\star\in\mathrm{CR02}_{\mathrm{CT}}$ is selected and the continuous-time optimality conditions are solved. The resulting optimal trajectory exhibits a single switching event corresponding to the release of the lower output constraint. The identified switching structure is therefore
\[
\mathcal{S}:\quad y_{\min}\ \text{active}\ \rightarrow\ \text{unconstrained},
\]
with one switching event ($N_s=1$).
\item[(2)] Sample initial states within the domain where the same switching structure is observed. A finite set of initial states
\[
\{x_0^{(r)}\}_{r=1}^{N_R} \subset \mathrm{CR02}_{\mathrm{CT}}
\]
is selected such that the switching structure is preserved for all samples.
\item[(2a)] Solve the optimality system consistent with the identified structure. For each sample $x_0^{(r)}$, the dynamic optimization problem is solved by enforcing the same two-arc structure as in $\mathrm{CR02}_{\mathrm{CT}}$: an initial arc with active lower output constraint followed by an unconstrained arc. The state, control, adjoint, and multiplier trajectories are computed accordingly.
\item[(2b)] Compute the switching time for each sample by enforcing junction and jump conditions. For each $x_0^{(r)}$, the switching time $t_s^{(r)}$ is obtained by enforcing the junction condition associated with leaving the active constraint. In this example, the condition reduces to
\[
x\big(t_s^{(r)}\big)=-0.5,
\]
which yields
\[
t_s^{(r)}=\ln\!\Big(\frac{1}{2(x_0^{(r)}+1)}\Big).
\]
\item[(2c)] Retain only switching times consistent with feasibility and structural assumptions. The pair $\big(x_0^{(r)},t_s^{(r)}\big)$ is retained only if the switching time satisfies $t_s^{(r)}\in[0,2]$ and the constraint activity before and after the switching instant is consistent with the identified structure $\mathcal{S}$.
\item[(3)] Fit an empirical switching-time function. Using the retained sample pairs $\{(x_0^{(r)},t_s^{(r)})\}$, an empirical approximation of the switching-time function is constructed. A cubic polynomial is found to provide an accurate representation over $\mathrm{CR02}_{\mathrm{CT}}$:
\[
\hat{t}_s(x_0)= -25.63\,x_0^3 -46.14\,x_0^2 -30.03\,x_0 -6.71,
\]
with coefficient of determination $R^2=0.9990$.
\item[(4)] Return the switching-time function associated with the structure. The fitted function $\hat{t}_s(x_0)$ is associated with structure 
$\mathcal{S}$ on $\mathrm{CR02}_{\mathrm{CT}}$. The exact expression 
is available here for reference:
\[
t_s(x_0)=\ln\!\Big(\frac{1}{2(x_0+1)}\Big).
\]
\end{enumerate}
The resulting exact and fitted switching-time functions obtained from the above procedure are summarized in Table~\ref{tab:ct_ts_all}.
\begin{table}[t]
\centering
\caption{Switching-time functions (exact and fitted) for switching regions.}
\label{tab:ct_ts_all}

\footnotesize
\setlength{\tabcolsep}{3pt}
\renewcommand{\arraystretch}{0.8}

\begin{tabular}{@{} 
L{0.14\columnwidth} 
L{0.25\columnwidth} 
L{0.43\columnwidth} 
L{0.11\columnwidth} 
@{}}
\toprule
Region & Exact $t_s(x_0)$ & Fitted $\hat t_s(x_0)$ & $R^2$ \\
\midrule

CR02$_{\mathrm{CT}}$
& $\ln\!\big(\frac{1}{2(x_0+1)}\big)$
& $\hat t_s(x_0)=
\mminus 25.6336x_0^3
\mminus 46.1437x_0^2
\mminus 30.0310x_0
\mminus 6.7059$
& 0.999037 \\

CR04$_{\mathrm{CT}}$
& $\ln\!\big(\frac{-1}{2(x_0-1)}\big)$
& $\hat t_s(x_0)=
\pplus 25.6336x_0^3
\mminus 46.1437x_0^2
\pplus 30.0310x_0
\mminus 6.7059$
& 0.999037 \\

\bottomrule
\end{tabular}
\end{table}
For each switching region, the exact expression is available for reference, while the fitted function provides a practical approximation when an explicit analytical form is not available.

\subsubsection{Discrete-Time Counterpart}
\label{subsubsec:ex1_dt}

To compare with the continuous-time multiparametric solution, the problem is discretized on a uniform grid $t_k=kh$, $k=0,\dots,N$, with step size $h=T/N$ and a zero-order hold (ZOH) input parameterization,
\[
u(t)=u_k,\qquad t\in[t_k,t_{k+1}),\quad k=0,\dots,N-1.
\]
Under ZOH, the continuous-time dynamics $\dot{x}(t)=A x(t)+B u(t)$ admit the exact discrete-time representation
\begin{equation}
x_{k+1}=A_d x_k + B_d u_k,
\label{eq:ex1_dt_dyn_general}
\end{equation}
where $A_d=e^{Ah}$ and $B_d=\int_{0}^{h}e^{A\tau}B\,d\tau$, which is the standard mapping used in MPC for linear systems \cite{rawlings2020model}.

For the specific system $\dot{x}(t)=-u(t)$, we have $A=0$ and $B=-1$, hence
\begin{equation}
x_{k+1}=x_k-hu_k,\qquad k=0,\dots,N-1,
\label{eq:ex1_dt_dyn}
\end{equation}
which is exact under the ZOH input parameterization.

The continuous-time constraints
\[
y(t)=x(t)+u(t),\qquad -1\le y(t)\le 1,\qquad u(t)\le 2
\]
are enforced at the grid nodes only,
\begin{equation}
-1\le x_k+u_k\le 1,\qquad u_k\le 2,\qquad k=0,\dots,N-1,
\label{eq:ex1_dt_cons}
\end{equation}
together with the initial condition $x_0=\theta$, $\theta\in[-2,2]$.

The continuous-time cost
\[
J=\frac{1}{2}x(T)^2+\frac{1}{2}\int_0^T\!\left(x(t)^2+u(t)^2\right)\,dt
\]
is approximated by a forward-Euler quadrature,
\begin{equation}
J_N
=
\frac{1}{2}x_N^2
+\frac{1}{2}\sum_{k=0}^{N-1}h\left(x_k^2+u_k^2\right),
\label{eq:ex1_dt_cost_euler}
\end{equation}
which yields the discrete-time parametric quadratic program
\begin{equation}
\begin{aligned}
\min_{\{x_k,u_k\}} \quad & J_N \\
\text{s.t.}\quad
& x_{k+1}=x_k-hu_k,\qquad k=0,\dots,N-1,\\
& -1\le x_k+u_k\le 1,\qquad k=0,\dots,N-1,\\
& u_k\le 2,\qquad k=0,\dots,N-1,\\
& x_0=\theta,\qquad \theta\in[-2,2].
\end{aligned}
\label{eq:ex1_dt_mpqp}
\end{equation}

The explicit multiparametric solution (critical-region partition and affine laws) is computed for \eqref{eq:ex1_dt_mpqp} using \texttt{PPOPT}. For $N=5$, the parameter space is partitioned into 11 discrete-time critical regions. Table~\ref{tab:ex1_dt_crs} reports three representative regions from the complete set of 11.

\begin{table}[t]
\centering
\caption{Representative explicit DT critical regions for $N=5$ ($h=0.4$):
affine laws $u(\theta)=K_u\theta+k_u$ and $x(\theta)=K_x\theta+k_x$.}
\label{tab:ex1_dt_crs}

\scriptsize
\setlength{\tabcolsep}{3.5pt} 
\renewcommand{\arraystretch}{0.92}

\begin{tabular}{@{}c@{\hspace{0.5pt}}c@{\hspace{2pt}}p{0.34\columnwidth}p{0.34\columnwidth}@{}}
\toprule
Region & $\theta$ interval & $u(\theta)$ & $x(\theta)$ \\
\midrule

$\mathrm{CR}^{\mathrm{DT}}_{01}$ &
$[-1.52,\,-0.89]$ &
\scalebox{0.85}{$
\begin{aligned}
\begin{bmatrix}-1\\-1.4\\-1.96\\-2.744\\-3.8416\end{bmatrix}\theta
&+\begin{bmatrix}-1\\-1.4\\-1.96\\-2.744\\-3.8416\end{bmatrix}
\end{aligned}$}
&
\scalebox{0.85}{$
\begin{aligned}
\begin{bmatrix}1.4\\1.96\\2.744\\3.8416\\5.37824\end{bmatrix}\theta
&+\begin{bmatrix}0.4\\0.96\\1.744\\2.8416\\4.37824\end{bmatrix}
\end{aligned}$}
\\[6pt]

$\mathrm{CR}^{\mathrm{DT}}_{06}$ &
$[-0.55,\,0.55]$ &
\scalebox{0.9}{$
\begin{bmatrix}0.815\\0.546\\0.363\\0.239\\0.153\end{bmatrix}\theta$}
&
\scalebox{0.9}{$
\begin{bmatrix}0.674\\0.456\\0.310\\0.215\\0.153\end{bmatrix}\theta$}
\\[6pt]

$\mathrm{CR}^{\mathrm{DT}}_{11}$ &
$[0.89,\,2]$ &
\scalebox{0.85}{$
\begin{aligned}
\begin{bmatrix}-1\\-1.4\\-1.96\\-2.744\\-3.8416\end{bmatrix}\theta
&+\begin{bmatrix}1\\1.4\\1.96\\2.744\\3.8416\end{bmatrix}
\end{aligned}$}
&
\scalebox{0.85}{$
\begin{aligned}
\begin{bmatrix}1.4\\1.96\\2.744\\3.8416\\5.37824\end{bmatrix}\theta
&-\begin{bmatrix}0.4\\0.96\\1.744\\2.8416\\4.37824\end{bmatrix}
\end{aligned}$}
\\

\bottomrule
\end{tabular}
\end{table}

Within each discrete-time critical region, the optimal control and state trajectories are given by affine functions of $\theta$. The larger number of DT regions compared to the continuous-time partition reflects the growth of admissible KKT-active sets induced by time discretization and node-wise constraint enforcement.

To illustrate the impact of discretization on explicit-solution complexity, Table~\ref{tab:dt_cr_vs_N} reports the number of DT critical regions for increasing $N$.

\begin{table}[t]
\centering
\caption{Number of DT critical regions versus discretization level $N$.}
\label{tab:dt_cr_vs_N}
\scriptsize
\setlength{\tabcolsep}{6pt}
\renewcommand{\arraystretch}{1.05}
\begin{tabular}{cc}
\toprule
$N$ & Number of DT critical regions \\
\midrule
5  & 11 \\
10 & 21 \\
15 & 31 \\
30 & 61 \\
\bottomrule
\end{tabular}
\end{table}

Table~\ref{tab:dt_cr_vs_N} shows that the number of DT critical regions increases with $N$, consistent with the combinatorial growth in admissible active-constraint patterns as the number of grid nodes increases.
\subsection{Example 2: Two-State System with Mixed Output and Input Constraints}
This example extends the framework to a two-state system with path constraints, demonstrating the approach on a more complex dynamic system.

\subsubsection{Problem Statement}

\begin{equation}
\label{eq:ex2_matrix}
\begin{aligned}
\min_{u(\cdot)}\quad
& \frac{1}{2}\,x(T)^\top x(T)
+ \frac{1}{2}\int_{0}^{T}
\Big( x(t)^\top x(t) + u(t)^2 \Big)\,dt
\\
\text{s.t.}\quad
& \dot{x}(t) =
\begin{bmatrix}
0 & -1 \\
-1 & 0
\end{bmatrix}
x(t)
+
\begin{bmatrix}
1 \\ 0
\end{bmatrix}
u(t),
\qquad t \in [0,T],
\\
& y(t) =
\begin{bmatrix}
-1 & 1 \\
\;\;1 & -1
\end{bmatrix}
x(t)
+
\begin{bmatrix}
1 \\ -1
\end{bmatrix}
u(t),
\qquad t \in [0,T],
\\
& y(t) \le
\begin{bmatrix}
1.2 \\ 2
\end{bmatrix},
\qquad t \in [0,T],
\\
& x(0) = x_0,
\qquad
-2 \le x_{0,i} \le 2,\quad i=1,2,
\\
& T = 2 .
\end{aligned}
\end{equation}
The initial state $(x_{0,1},x_{0,2})$ is treated as the parameter in problem~\eqref{eq:ex2_matrix}, and the objective is to express the optimal control law explicitly as a function of the initial state, subject to path constraints enforced over the full horizon.
\subsubsection{Continuous-Time Multiparametric Solution}
\label{subsubsec:ex2_ct_solution}

The parameter domain is
\[
\Theta := \{(x_{0,1},x_{0,2})\in\mathbb{R}^2 \mid -2\le x_{0,1}\le 2,\ -2\le x_{0,2}\le 2\}.
\]

\noindent
(1) Initialize the unexplored parameter set:
\[
\Theta_{\mathrm{rest}} \leftarrow \Theta.
\]

\noindent
(2a) Fix a seed point in the initial set:
\[
x_0^{(2)}=\begin{bmatrix}0\\0.5\end{bmatrix}\in\Theta_{\mathrm{rest}}.
\]

\noindent
(2b) Solve the dynamic optimization problem at $x_0=x_0^{(2)}$. The optimal structure consists of two arcs,
\[
\text{Arc A (active)}:\ 0\le t\le t_s,
\,
\text{Arc B (inactive)}:\ t_s\le t\le T,
\]
where the first path constraint is active on Arc A,
\[
g_1(t)=y_1(t)-1.2=0,
\qquad \mu_1(t)\ge 0,
\]
and Arc B is unconstrained with $\mu_1(t)=\mu_2(t)=0$.

\noindent
(2c) Closed-form expressions on each arc.

\textbf{Arc B (inactive, $t_s\le t\le T$).}

On this arc, $\mu_1(t)=\mu_2(t)=0$. The stationarity condition gives $u(t)=-\lambda_1(t)$. Substituting into the state-costate equations yields the linear homogeneous system
\[
\frac{d}{dt}
\begin{bmatrix}
x_1(t)\\
x_2(t)\\
\lambda_1(t)\\
\lambda_2(t)
\end{bmatrix}
=
A_{\mathrm{in}}
\begin{bmatrix}
x_1(t)\\
x_2(t)\\
\lambda_1(t)\\
\lambda_2(t)
\end{bmatrix},
\,
A_{\mathrm{in}}=
\begin{bmatrix}
0 & -1 & -1 & 0\\
-1& 0  & 0  & 0\\
-1& 0  & 0  & 1\\
0 & -1 & 1  & 0
\end{bmatrix}.
\]

A closed-form solution is
\[
\begin{aligned}
x_1(t) &= \sqrt{2}\,C_1 e^{-\sqrt{2}\,t}-\sqrt{2}\,C_2 e^{\sqrt{2}\,t}+C_3 e^{-t}+C_4 e^{t},\\
x_2(t) &= C_1 e^{-\sqrt{2}\,t}+C_2 e^{\sqrt{2}\,t}+C_3 e^{-t}-C_4 e^{t},\\
\lambda_1(t) &= C_1 e^{-\sqrt{2}\,t}+C_2 e^{\sqrt{2}\,t},\\
\lambda_2(t) &= C_3 e^{-t}+C_4 e^{t},
\qquad t\in[t_s,T].
\end{aligned}
\]
where $C_1,\ldots,C_4$ are arc constants.

\textbf{Arc A (constraint 1 active, $0\le t\le t_s$).}

On this arc, $g_1(t)=0$ and $\mu_1(t)\ge 0$. Using stationarity and differentiating the active constraint leads to an augmented linear system in $(x,\lambda,\mu_1)$,
\[
\frac{d}{dt}
\begin{bmatrix}
x_1(t)\\
x_2(t)\\
\lambda_1(t)\\
\lambda_2(t)\\
\mu_1(t)
\end{bmatrix}
=
A_{\mathrm{ac}}
\begin{bmatrix}
x_1(t)\\
x_2(t)\\
\lambda_1(t)\\
\lambda_2(t)\\
\mu_1(t)
\end{bmatrix},
\,
A_{\mathrm{ac}}=
\begin{bmatrix}
0 & -1 & -1 & 0 & -1\\
-1& 0  & 0  & 0 & 0\\
-1& 0  & 0  & 1 & 1\\
0 & -1 & 1  & 0 & -1\\
0 & 1  & 1  & -1& 0
\end{bmatrix}.
\]

A closed-form solution is
\[
\begin{aligned}
x_1(t) &= C'_1 e^{-t}-\frac{4}{3}C'_2 e^{2t},\\
x_2(t) &= C'_1 e^{-t}+\frac{2}{3}C'_2 e^{2t}-C'_3,\\
\lambda_1(t) &= C'_2 e^{2t}-C'_4 e^{-2t}-C'_5 e^{t},\\
\lambda_2(t) &= C'_1 e^{-t}-\frac{1}{3}C'_2 e^{2t}-C'_3 + C'_4 e^{-2t}-2C'_5 e^{t},\\
\mu_1(t) &= C'_2 e^{2t}+C'_3+C'_4 e^{-2t}+C'_5 e^{t},
\qquad t\in[0,t_s].
\end{aligned}
\]
where $C'_1,\ldots,C'_5$ are arc constants.

To determine the coefficients in these closed-form expressions, the boundary conditions at $t=0$ and $t=T$, together with the jump and junction conditions at $t=t_s$, are imposed on the two-arc solution.

(i) Boundary conditions:
\[
x_1(0)=x_{0,1},\, x_2(0)=x_{0,2},\,
\lambda_1(T)=x_1(T),\, \lambda_2(T)=x_2(T).
\]

(ii) Jump (matching) conditions at the switching time:
\[
x(t_s^-)=x(t_s^+),
\qquad
\lambda(t_s^-)=\lambda(t_s^+).
\]

(iii) Junction condition at the switching time:
\[
\mu_1(t_s^+)=0,
\qquad
\mu_1(t_s^-)\ge 0.
\]

After applying these conditions and simplifying, the following explicit relations are obtained:
\[
C'_3=-0.6,
\]
\[
C'_1 = \frac{x_{0,1}+2x_{0,2}+2C'_3}{3},
\qquad
C'_2 = \frac{x_{0,2}-x_{0,1}+C'_3}{2},
\]
and the inactive-arc constants $C_1$ and $C_2$ can be written as linear functions of $(C_3,C_4)$,
\[
C_1 = 169.21C_4  -0.81 C_3,
\qquad
C_2 = 0.28C_4 +0.0028C_3.
\]

For the remaining coefficients, the explicit expressions become lengthy. In general, these coefficients depend on $(x_{0,1},x_{0,2})$ and include exponential terms involving the switching time $t_s(x_{0,1},x_{0,2})$. For compact reporting, we approximate the longest expressions with fitted forms that preserve this dependence. Table~\ref{tab:ex2_fit_coeffs} lists the fitted coefficients used in $\mathrm{CR02}_{\mathrm{CT}}$.

\begin{table}[t]
\centering
\footnotesize
\setlength{\tabcolsep}{3pt}
\renewcommand{\arraystretch}{1.15}
\caption{Selected fitted coefficients used for reporting in $\mathrm{CR02}_{\mathrm{CT}}$.}
\label{tab:ex2_fit_coeffs}

\begin{tabular}{@{} c p{0.74\columnwidth} c @{}}
\toprule
Coefficient & Fit (in terms of $x_{0,1},x_{0,2},t_s$) & $R^2$\\
\midrule

$C_4$
& \adjustbox{max width=\linewidth}{$-0.033\,x_{0,1}e^{-0.88 t_s}+0.038\,x_{0,2}e^{-0.73 t_s}-0.013$}
& 0.997 \\

$C'_4$
& \adjustbox{max width=\linewidth}{$-1.73\,x_{0,1}e^{1.61 t_s}+1.72\,x_{0,2}e^{1.62 t_s}$}
& 0.999 \\

$C'_5$
& \adjustbox{max width=\linewidth}{$0.022\,x_{0,1}e^{-0.88 t_s}-0.025\,x_{0,2}e^{-0.73 t_s}+0.009$}
& 0.997 \\

$C_3$
& \adjustbox{max width=\linewidth}{$-2.12\,x_{0,1}e^{0.87 t_s}+3.07\,x_{0,2}e^{0.68 t_s}$}
& 0.999 \\
\bottomrule
\end{tabular}
\end{table}

Moreover, the switching time $t_s$ is determined as part of the algebraic solution associated with the two-arc structure. For parameter values where switching occurs, the function $t_s(x_{0,1},x_{0,2})$ is obtained by sampling the parameter space and solving the dynamic optimization problem. The resulting fitted switching-time functions are reported in Table~\ref{tab:ex2_ts_fits}.

\begin{table}[t]
\centering
\caption{Fitted switching-time functions for the CT switching regions in Example~2.}
\label{tab:ex2_ts_fits}

\footnotesize
\setlength{\tabcolsep}{3pt}
\renewcommand{\arraystretch}{1.2}

\begin{tabular}{@{} c L{0.72\columnwidth} c @{}}
\toprule
Region & Fitted $\hat t_s(x_{0,1},x_{0,2})$ & $R^2$ \\
\midrule

CR02$_{\mathrm{CT}}$
& $\hat t_s=
-1.10
\mminus 4.49x_{0,1}
\pplus 4.48x_{0,2}
\mminus 4.68x_{0,1}^2
\pplus 9.35x_{0,1}x_{0,2}
\mminus 4.67x_{0,2}^2
\mminus 2.17x_{0,1}^3
\pplus 6.49x_{0,1}^2x_{0,2}
\mminus 6.49x_{0,1}x_{0,2}^2
\pplus 2.16x_{0,2}^3$
& 0.999 \\

CR04$_{\mathrm{CT}}$
& $\hat t_s=
-1.13
\pplus 2.79x_{0,1}
\mminus 2.79x_{0,2}
\mminus 1.81x_{0,1}^2
\pplus 3.63x_{0,1}x_{0,2}
\mminus 1.82x_{0,2}^2
\pplus 0.52x_{0,1}^3
\mminus 1.57x_{0,1}^2x_{0,2}
\pplus 1.57x_{0,1}x_{0,2}^2
\mminus 0.52x_{0,2}^3$
& 0.999 \\

\bottomrule
\end{tabular}
\end{table}

The resulting continuous-time critical regions are summarized in Table~\ref{tab:ex2_ct_arc_regions}, together with their corresponding arc structures and defining inequalities in the parameter space.

\begin{table}[t]
\centering
\footnotesize
\caption{Active-arc structures and critical-region descriptions for Example~2.}
\label{tab:ex2_ct_arc_regions}

\setlength{\tabcolsep}{3pt}
\renewcommand{\arraystretch}{1.15}

\begin{tabular}{@{} C{0.14\columnwidth} C{0.26\columnwidth} L{0.56\columnwidth} @{}}
\toprule
\textbf{Region} & \textbf{Arc structure} & \textbf{Region description (borders)}\\
\midrule

CR01$_{\mathrm{CT}}$ &
{\scriptsize unconstrained} &
\parbox[t]{\linewidth}{\footnotesize
$\{-3.36x_{0,1}+3.36x_{0,2}-1.2\le 0,$\\
\hspace*{1.2em} $\,3.36x_{0,1}-3.36x_{0,2}-2\le 0\}$}
\\

CR02$_{\mathrm{CT}}$ &
{\scriptsize $y_1$ active $\rightarrow$ unconstrained} &
\parbox[t]{\linewidth}{\footnotesize
$\{-3.36x_{0,1}+3.36x_{0,2}-1.2\ge 0,$\\
\hspace*{1.2em} $-1.00x_{0,1}+x_{0,2}-0.61\le 0\}$}
\\

CR03$_{\mathrm{CT}}$ &
{\scriptsize $y_1$ active (full)} &
$\{-1.00x_{0,1}+x_{0,2}-0.61\ge 0\}$
\\

CR04$_{\mathrm{CT}}$ &
{\scriptsize $y_2$ active $\rightarrow$ unconstrained} &
\parbox[t]{\linewidth}{\footnotesize
$\{3.36x_{0,1}-3.36x_{0,2}-2\ge 0,$\\
\hspace*{1.2em} $1.00x_{0,1}-x_{0,2}-1.01\le 0\}$}
\\

CR05$_{\mathrm{CT}}$ &
{\scriptsize $y_2$ active (full)} &
$\{1.00x_{0,1}-x_{0,2}-1.01\ge 0\}$
\\

\bottomrule
\end{tabular}
\end{table}

A graphical representation of the resulting critical-region partition is shown in Figure~\ref{fig:ex2_ct_cr_map}.

\begin{figure}[t]
    \centering
    \includegraphics[width=0.7\linewidth]{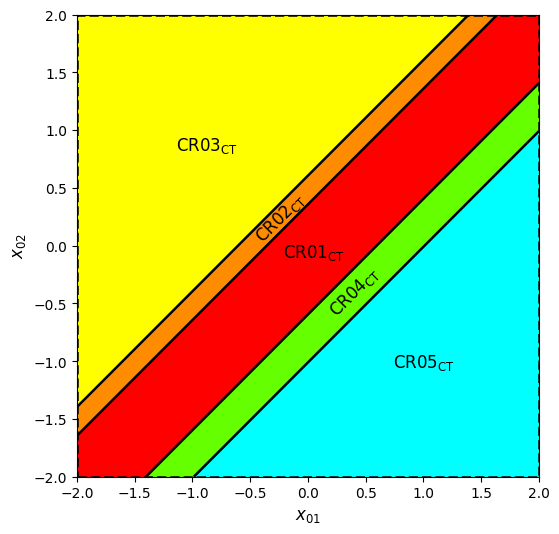}
    \caption{Continuous-time critical-region map for Example 2.}
    \label{fig:ex2_ct_cr_map}
\end{figure}

Each colored region in Figure~\ref{fig:ex2_ct_cr_map} corresponds to one of the active-arc structures listed in Table~\ref{tab:ex2_ct_arc_regions}.

\subsubsection{Discrete-Time Counterpart}
\label{subsubsec:ex2_dt}

The discrete-time counterpart of Example 2 is obtained using the same modeling and discretization choices adopted in Example 1. In the following, we report discrete-time critical-region maps for two discretization levels and provide representative affine laws for one region.

\begin{figure}[t]
    \centering
    \begin{minipage}[t]{0.7\linewidth}
        \centering
        \includegraphics[width=\linewidth]{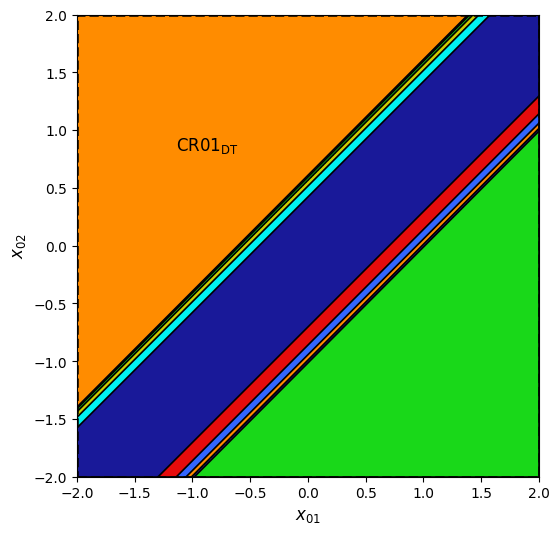}
        \vspace{1pt}
        {\small (a) $N=5$}
    \end{minipage}\hfill
    \begin{minipage}[t]{0.7\linewidth}
        \centering
        \includegraphics[width=\linewidth]{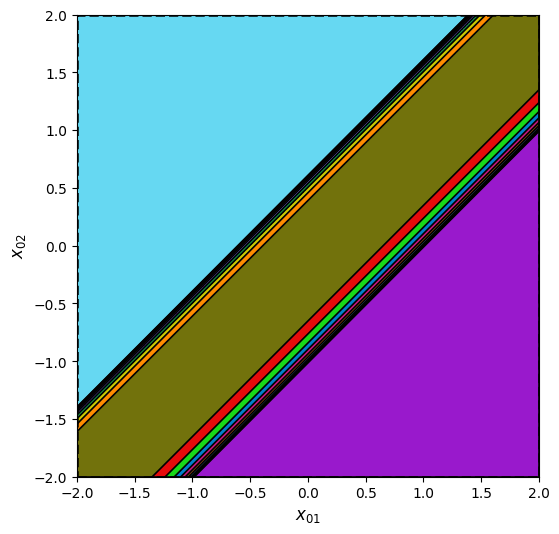}
        \vspace{1pt}
        {\small (b) $N=10$}
    \end{minipage}
    \caption{Discrete-time critical-region maps for Example 2: (a) $N=5$ (11 critical regions); (b) $N=10$ (21 critical regions).}
    \label{fig:ex2_dt_cr_maps}
\end{figure}

Table~\ref{tab:ex2_dt_cr01_N5} reports the affine state and control laws for a representative region, $\mathrm{CR01}_{\mathrm{DT}}$, obtained for $N=5$.

\begin{table}[t]
\centering
\footnotesize
\caption{Representative discrete-time critical region for Example 2 ($\mathrm{CR01}_{\mathrm{DT}}$, $N=5$).}
\label{tab:ex2_dt_cr01_N5}
\setlength{\tabcolsep}{4pt}
\renewcommand{\arraystretch}{1.15}
\begin{tabular}{c c l}
\hline
\textbf{Variable} & \textbf{Index} & \textbf{Affine law} \\
\hline
$u(\theta)$ & $k=0$ & $u_0 = 1.00\,x_{0,1}-1.00\,x_{0,2}+1.20$ \\
 & $k=1$ & $u_1 = 1.98\,x_{0,1}-1.98\,x_{0,2}+1.79$ \\
 & $k=2$ & $u_2 = 3.93\,x_{0,1}-3.93\,x_{0,2}+2.96$ \\
 & $k=3$ & $u_3 = 7.81\,x_{0,1}-7.81\,x_{0,2}+5.28$ \\
 & $k=4$ & $u_4 = 15.48\,x_{0,1}-15.48\,x_{0,2}+9.89$ \\[3mm]
\hline
$x(\theta)$& $k=1$ &
$\begin{aligned}
x_{1,1} &= 1.49\,x_{0,1}-0.82\,x_{0,2}+0.49\\
x_{1,2} &= -0.49\,x_{0,1}+1.16\,x_{0,2}-0.10
\end{aligned}$ \\[3mm]
 & $k=2$ &
$\begin{aligned}
x_{2,1} &= 2.63\,x_{0,1}-2.18\,x_{0,2}+1.31\\
x_{2,2} &= -1.31\,x_{0,1}+1.75\,x_{0,2}-0.45
\end{aligned}$ \\[3mm]
 & $k=3$ &
$\begin{aligned}
x_{3,1} &= 5.00\,x_{0,1}-4.69\,x_{0,2}+2.82\\
x_{3,2} &= -2.81\,x_{0,1}+3.11\,x_{0,2}-1.27
\end{aligned}$ \\[3mm]
 & $k=4$ &
$\begin{aligned}
x_{4,1} &= 9.76\,x_{0,1}-9.56\,x_{0,2}+5.74\\
x_{4,2} &= -5.72\,x_{0,1}+5.92\,x_{0,2}-2.95
\end{aligned}$ \\[3mm]
 & $k=5$ &
$\begin{aligned}
x_{5,1} &= 19.26\,x_{0,1}-19.13\,x_{0,2}+11.48\\
x_{5,2} &= -11.45\,x_{0,1}+11.59\,x_{0,2}-6.35
\end{aligned}$ \\
\hline
\end{tabular}
\end{table}

Finally, Table~\ref{tab:ex2_dt_cr_vs_N} summarizes the number of discrete-time critical regions obtained for increasing discretization levels.

\begin{table}[t]
\centering
\caption{Number of DT critical regions versus discretization level $N$ for Example 2.}
\label{tab:ex2_dt_cr_vs_N}
\setlength{\tabcolsep}{8pt}
\renewcommand{\arraystretch}{1.15}
\begin{tabular}{c c}
\hline
$N$ & Number of DT critical regions \\
\hline
5  & 11 \\
8 & 17 \\
12 & 25 \\
20 & 41 \\
\hline
\end{tabular}
\end{table}

\subsection{Comparison and Remarks}
\label{subsubsec:comparison_ct_dt}

Figure~\ref{fig:control_comparison_ex2} reveals the essential distinction between the two approaches through a representative trajectory from Example~2. The continuous-time solution exhibits a smooth exponential profile with a switching time at $t_s = 0.1396$. With only $N=5$ intervals, the discrete-time approximation produces a crude staircase trajectory that misses both the switching instant and the exponential structure. Refining to $N=20$ improves accuracy considerably, but the number of critical regions grows from 11 to 41, whereas the continuous-time partition remains fixed at 5 regions. This reveals a fundamental trade-off: discrete-time formulations must balance approximation accuracy against partition complexity.

\begin{figure}[t]
    \centering
    \includegraphics[width=0.85\linewidth]{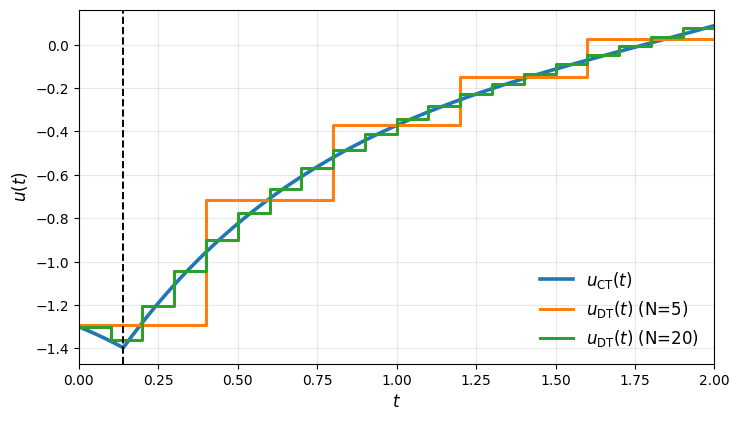}
    \caption{Optimal control trajectories for Example~2 at $x_0 = (-0.95, -1.65)$: continuous-time (blue) versus discrete-time with $N=5$ (orange) and $N=20$ (green).}
    \label{fig:control_comparison_ex2}
\end{figure}

This visual difference stems from how each approach handles control parameterization and time. Discrete-time formulations adopt a zero-order hold assumption, restricting the control to be piecewise constant across $N$ intervals. This introduces $N$ state and control variables as decision variables, leading to Karush--Kuhn--Tucker conditions that form a finite-dimensional algebraic system scaling with $N$. Continuous-time formulations impose no such restriction—time remains a continuous variable, and the optimal control structure emerges from Pontryagin's Maximum Principle through coupled state--adjoint differential equations. The problem dimension stays fixed, determined only by the system order.

The resulting solutions take distinctly different forms. For linear--quadratic problems, continuous-time solutions are closed-form exponential functions of time, explicitly depending on $x_0$, parameter-dependent switching times $t_s(x_0)$, and continuous time $t$ (Tables~\ref{tab:CT_explicit} and Section~\ref{subsubsec:ex2_ct_solution}). Discrete-time solutions are piecewise affine in $\theta$ at each grid point $k$, taking the form $u(\theta)=K_u\theta+k_u$ and $x(\theta)=K_x\theta+k_x$ (Tables~\ref{tab:ex1_dt_crs} and~\ref{tab:ex2_dt_cr01_N5}), with time entering implicitly through the index rather than explicitly.

A particularly valuable feature of the continuous-time approach is its explicit identification of switching times $t_s(x_0)$, revealing precisely when the active constraint set changes as a function of initial conditions. Table~\ref{tab:ct_ts_all} provides analytical expressions characterizing these transitions. Discrete-time formulations cannot capture this structure directly—transitions occur only at predefined grid points, making the apparent switching behavior dependent on discretization rather than system dynamics.

The partition complexity difference is substantial. For Example~1, the discrete-time formulation requires 11 regions at $N=5$ and 61 regions at $N=30$ (Tables~\ref{tab:dt_cr_vs_N} and~\ref{tab:ex2_dt_cr_vs_N}), while the continuous-time partition contains just 5 regions regardless of discretization. This combinatorial growth arises from the increasing number of constraint activation patterns across more time nodes, forcing practitioners to balance accuracy against computational burden.

Constraint enforcement reveals another key difference. Continuous-time formulations enforce path constraints over the entire horizon, for all $t \in [0,T]$, yielding a fixed feasible region. Discrete-time formulations check constraints only at grid points, potentially missing inter-sample violations. In Example~1, the continuous-time feasible set is $\theta \in [-1.27, 2.0]$, whereas discrete-time with $N=5$ admits $\theta \in [-1.52, 2.0]$. As $N$ increases, the discrete-time feasible region contracts toward its continuous-time counterpart.

Despite these differences, both formulations partition the parameter space using the same principle: within each critical region, the active constraint set remains invariant and the corresponding Lagrange multipliers satisfy strict complementarity. The distinction lies in representation—continuous-time solutions encode this through switching times and exponential trajectories, while discrete-time solutions approximate it through an expanding collection of affine segments.

\section{Discussion}

The comparison in Section~4 shows that continuous-time formulations achieve an order-of-magnitude reduction in critical regions—5 versus 61 in Example~1. This directly impacts online implementation: locating the current parameter among fewer regions is substantially faster, determining practical viability for systems requiring rapid control updates or operating under strict computational constraints.

The examples focus on linear-quadratic problems, which arise in process control, trajectory planning, and resource allocation. For this class, Pontryagin's Maximum Principle yields analytical solutions. Several directions warrant investigation for extending continuous-time multiparametric methods beyond this setting.

When uncertainty such as disturbances affects the system, robust continuous-time multiparametric formulations become necessary to maintain feasibility and constraint satisfaction. Developing systematic approaches for handling parametric uncertainty in the continuous-time framework represents an important research direction.

Computational tools specifically designed for solving linear-quadratic continuous-time multiparametric optimization problems do not yet exist. Developing such software that automates the workflow from problem specification to parametric solution construction would facilitate practical implementation and wider adoption.

Problems with multiple switching times introduce significant complexity. When the number of switches varies with parameters, critical regions may become nonconvex. Additionally, when the minimum over adjoint multipliers or maximum over path constraint violations do not occur at time endpoints (0 or $T$), the critical region boundaries become nonlinear rather than linear. Systematic methods for handling these cases while preserving computational advantages merit investigation.

Extending to nonlinear systems presents both challenges and opportunities. Recent results \cite{10886652} demonstrate that strict convexity of the Hamiltonian in the control variable ensures global optimality for continuous-time nonlinear optimal control problems, even when dynamics and costs are nonconvex. This contrasts with discrete-time formulations, where Karush-Kuhn-Tucker conditions provide only necessary conditions and multiple local optima may exist. For nonlinear systems where the Hamiltonian is convex in control, Pontryagin's Maximum Principle yields globally optimal solutions—an advantage over discrete-time approaches. However, the resulting boundary value problems rarely admit closed-form solutions, necessitating hybrid numerical-explicit approaches. Identifying which nonlinear classes permit tractable continuous-time multiparametric solutions while preserving global optimality guarantees represents a promising research direction.

Within the PAROC framework \cite{pistikopoulos2015paroc} (Figure~\ref{fig:paroc_framework}), the continuous-time multiparametric approach developed here represents the next generation: working directly with ordinary differential equations through Pontryagin's Maximum Principle rather than algebraic Karush-Kuhn-Tucker conditions from discretization. Model reduction remains employed when needed, but the fundamental shift is from algebraic formulations to differential equation-based methods. For linear systems, this ODE-based direction is now viable; extending to the broader problem classes outlined above would establish it as a practical framework for more complex applications.

\begin{figure}[t]
    \centering
    \includegraphics[width=1.05\linewidth]{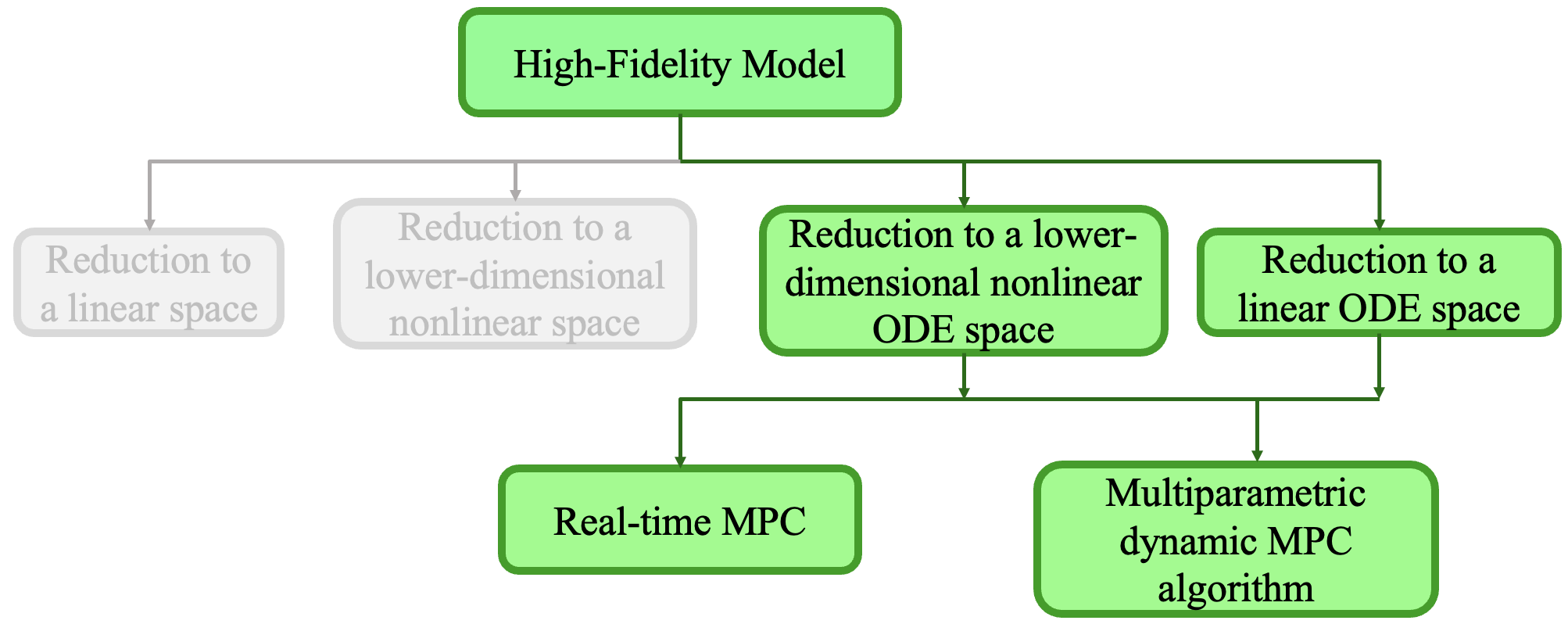}
    \caption{Next generation of parametric optimization and control (PAROC) framework. The highlighted path indicates the trajectory toward multiparametric methods in ODE spaces.}
    \label{fig:paroc_framework}
\end{figure}
\section{Conclusions}

This work demonstrates that multiparametric optimal control can be performed directly on ordinary differential equations for linear-quadratic systems, achieving order-of-magnitude reductions in solution complexity compared to discrete-time formulations. The continuous-time partition remains fixed at 5 critical regions regardless of accuracy requirements, while discrete-time partitions grow from 11 to over 60 regions as discretization refines—a difference that directly impacts both offline computation and online region lookup for real-time model predictive control.

The computational advantage stems from working directly with Pontryagin's Maximum Principle rather than discretized Karush-Kuhn-Tucker conditions. By avoiding artificial time discretization, the continuous-time approach captures the essential switching structure more compactly while providing explicit analytical expressions for switching times $t_s(x_0)$ and state-control trajectories unavailable from piecewise-affine discrete-time laws. These explicit switching structures also accelerate gradient-based online optimization by providing analytical information about solution topology.

The systematic algorithms presented here—for switching structure identification, parametric switching time computation, and critical region construction—establish continuous-time multiparametric programming as a practical computational framework for linear-quadratic systems. Natural extensions include robust formulations under parametric uncertainty, nonlinear systems where Hamiltonian convexity ensures global optimality despite nonconvex dynamics, and dedicated computational tools that automate the workflow from problem specification to parametric solution construction. By demonstrating viability for the linear-quadratic case, this work opens a path toward the next generation of parametric methods operating directly in ordinary differential equation spaces.

\vspace{6pt}

\bibliographystyle{plain}
\bibliography{references}

@book{rawlings2020model,
  title={Model predictive control: theory, computation, and design},
  author={Rawlings, James Blake and Mayne, David Q and Diehl, Moritz and others},
  volume={2},
  year={2020},
  publisher={Nob Hill Publishing Madison, WI}
}

@article{muske1993model,
  title={Model predictive control with linear models},
  author={Muske, Kenneth R and Rawlings, James B},
  journal={AIChE Journal},
  volume={39},
  number={2},
  pages={262--287},
  year={1993},
  publisher={Wiley Online Library}
}

@article{richalet1993industrial,
  title={Industrial applications of model based predictive control},
  author={Richalet, Jacques},
  journal={Automatica},
  volume={29},
  number={5},
  pages={1251--1274},
  year={1993},
  publisher={Elsevier}
}

@article{pistikopoulos2002line,
  title={On-line optimization via off-line parametric optimization tools},
  author={Pistikopoulos, Efstratios N and Dua, Vivek and Bozinis, Nikolaos A and Bemporad, Alberto and Morari, Manfred},
  journal={Computers \& Chemical Engineering},
  volume={26},
  number={2},
  pages={175--185},
  year={2002},
  publisher={Elsevier}
}

@article{bemporad2002explicit,
  title={The explicit linear quadratic regulator for constrained systems},
  author={Bemporad, Alberto and Morari, Manfred and Dua, Vivek and Pistikopoulos, Efstratios N},
  journal={Automatica},
  volume={38},
  number={1},
  pages={3--20},
  year={2002},
  publisher={Elsevier}
}

@article{sakizlis2005explicit,
  title={Explicit solutions to optimal control problems for constrained continuous-time linear systems},
  author={Sakizlis, V and Perkins, JD and Pistikopoulos, EN},
  journal={IEE Proceedings-Control Theory and Applications},
  volume={152},
  number={4},
  pages={443--452},
  year={2005},
  publisher={IET}
}

@book{Pistikopoulos2007,
  title     = {Multi-Parametric Model-Based Control},
  editor    = {Pistikopoulos, Efstratios N. and Georgiadis, Michael C. and Dua, Vivek},
  publisher = {Wiley-VCH},
  year      = {2007},
  address   = {Weinheim, Germany}
}

@book{pistikopoulos2020multi,
  title={Multi-parametric optimization and control},
  author={Pistikopoulos, Efstratios N and Diangelakis, Nikolaos A and Oberdieck, Richard},
  year={2020},
  publisher={John Wiley \& Sons}
}

@incollection{kopp1962pontryagin,
  title={Pontryagin maximum principle},
  author={Kopp, Richard E},
  booktitle={Mathematics in Science and Engineering},
  volume={5},
  pages={255--279},
  year={1962},
  publisher={Elsevier}
}

@article{augustin2001computational,
  title={Computational sensitivity analysis for state constrained optimal control problems},
  author={Augustin, Dirk and Maurer, Helmut},
  journal={Annals of Operations Research},
  volume={101},
  number={1},
  pages={75--99},
  year={2001},
  publisher={Springer}
}

@book{stengel1994optimal,
  title={Optimal control and estimation},
  author={Stengel, Robert F},
  year={1994},
  publisher={Courier Corporation}
}

@book{gelfand2000calculus,
  title={Calculus of variations},
  author={Gelfand, Izrail Moiseevitch and Silverman, Richard A and others},
  year={2000},
  publisher={Courier Corporation}
}

@article{borrelli2005explicit,
  title={Explicit MPC for discrete hybrid systems},
  author={Borrelli, Francesco and Baoti{\'c}, Mato and Bemporad, Alberto and Morari, Manfred},
  journal={IEEE Transactions on Automatic Control},
  volume={50},
  number={12},
  pages={1919--1925},
  year={2005},
  publisher={IEEE}
}

@inproceedings{bemporad2003min,
  title={Min-max control of constrained uncertain discrete-time linear systems},
  author={Bemporad, Alberto and Borrelli, Francesco and Morari, Manfred},
  booktitle={European Control Conference (ECC)},
  pages={1651--1656},
  year={2003}
}

@article{sakizlis2004design,
  title={Design of robust model-based controllers via parametric programming},
  author={Sakizlis, Vassilis and Kakalis, Nikolaos MP and Dua, Vivek and Perkins, Julian D and Pistikopoulos, Efstratios N},
  journal={Automatica},
  volume={40},
  number={2},
  pages={189--201},
  year={2004},
  publisher={Elsevier}
}

@book{pontryagin1962mathematical,
  title={The Mathematical Theory of Optimal Processes},
  author={Pontryagin, Lev Semenovich and Boltyanskii, Vladimir Grigorevich and Gamkrelidze, Revaz Valerianovich and Mishchenko, Evgenii Frolovich},
  year={1962},
  publisher={Wiley},
  address={New York}
}

@incollection{kenefake2022ppopt,
  title={Ppopt-multiparametric solver for explicit mpc},
  author={Kenefake, Dustin and Pistikopoulos, Efstratios N},
  booktitle={Computer Aided Chemical Engineering},
  volume={51},
  pages={1273--1278},
  year={2022},
  publisher={Elsevier}
}

@article{pistikopoulos2015paroc,
  title={PAROC—An integrated framework and software platform for the optimisation and advanced model-based control of process systems},
  author={Pistikopoulos, Efstratios N and Diangelakis, Nikolaos A and Oberdieck, Richard and Papathanasiou, Maria M and Nascu, Ioana and Sun, Muxin},
  journal={Chemical Engineering Science},
  volume={136},
  pages={115--138},
  year={2015},
  publisher={Elsevier}
}

@book{luenberger1997optimization,
  title={Optimization by Vector Space Methods},
  author={Luenberger, David G.},
  year={1997},
  publisher={John Wiley \& Sons},
  address={New York}
}

@article{dombrovskii2018model,
  title={Model predictive control of constrained Markovian jump nonlinear stochastic systems and portfolio optimization under market frictions},
  author={Dombrovskii, Vladimir and Obyedko, Tatiana and Samorodova, Maria},
  journal={Automatica},
  volume={87},
  pages={61--68},
  year={2018},
  publisher={Elsevier}
}

@article{falcone2007predictive,
  title={Predictive active steering control for autonomous vehicle systems},
  author={Falcone, Paolo and Borrelli, Francesco and Asgari, Jahan and Tseng, Hongtei Eric and Hrovat, Davor},
  journal={IEEE Transactions on control systems technology},
  volume={15},
  number={3},
  pages={566--580},
  year={2007},
  publisher={IEEE}
}

@incollection{diehl2006fast,
  title={Fast direct multiple shooting algorithms for optimal robot control},
  author={Diehl, Moritz and Bock, Hans Georg and Diedam, Holger and Wieber, P-B},
  booktitle={Fast motions in biomechanics and robotics: optimization and feedback control},
  pages={65--93},
  year={2006},
  publisher={Springer}
}

@book{brenan1995numerical,
  title={Numerical solution of initial-value problems in differential-algebraic equations},
  author={Brenan, Kathryn Eleda and Campbell, Stephen L and Petzold, Linda Ruth},
  year={1995},
  publisher={SIAM}
}

@book{biegler2012control,
  title={Control and optimization with differential-algebraic constraints},
  author={Biegler, Lorenz T and Campbell, Stephen L and Mehrmann, Volker},
  year={2012},
  publisher={SIAM}
}

@article{srinivasan2003dynamic,
  title={Dynamic optimization of batch processes: I. Characterization of the nominal solution},
  author={Srinivasan, Balasubramaniam and Palanki, Srinivas and Bonvin, Dominique},
  journal={Computers \& Chemical Engineering},
  volume={27},
  number={1},
  pages={1--26},
  year={2003},
  publisher={Elsevier}
}

@INPROCEEDINGS{10886652,
  author={Abhijeet and Mohamed, Mohamed Naveed Gul and Sharma, Aayushman and Chakravorty, Suman},
  booktitle={2024 IEEE 63rd Conference on Decision and Control (CDC)}, 
  title={Convexity in Optimal Control Problems}, 
  year={2024},
  volume={},
  number={},
  pages={261-266},
  doi={10.1109/CDC56724.2024.10886652}}

@article{oberdieck2015explicit,
  title={Explicit hybrid model-predictive control: The exact solution},
  author={Oberdieck, Richard and Pistikopoulos, Efstratios N},
  journal={Automatica},
  volume={58},
  pages={152--159},
  year={2015},
  publisher={Elsevier}
}

@article{kouramas2013algorithm,
  title={An algorithm for robust explicit/multi-parametric model predictive control},
  author={Kouramas, Konstantinos I and Panos, Christos and Fa{\'\i}sca, Nuno P and Pistikopoulos, Efstratios N},
  journal={Automatica},
  volume={49},
  number={2},
  pages={381--389},
  year={2013},
  publisher={Elsevier}
}

@article{zavala2009advanced,
  title={The advanced-step NMPC controller: Optimality, stability and robustness},
  author={Zavala, Victor M and Biegler, Lorenz T},
  journal={Automatica},
  volume={45},
  number={1},
  pages={86--93},
  year={2009},
  publisher={Elsevier}
}

@article{zavala2008fast,
  title={Fast implementations and rigorous models: Can both be accommodated in NMPC?},
  author={Zavala, Victor M and Laird, Carl D and Biegler, Lorenz T},
  journal={International Journal of Robust and Nonlinear Control: IFAC-Affiliated Journal},
  volume={18},
  number={8},
  pages={800--815},
  year={2008},
  publisher={Wiley Online Library}
}

@article{de1995extension,
  title={An extension of Newton-type algorithms for nonlinear process control},
  author={De Oliveira, Nuno MC and Biegler, Lorenz T},
  journal={Automatica},
  volume={31},
  number={2},
  pages={281--286},
  year={1995},
  publisher={Elsevier}
}

@article{charitopoulos2017nonlinear,
  title={Nonlinear model-based process operation under uncertainty using exact parametric programming},
  author={Charitopoulos, Vassilis M and Papageorgiou, Lazaros G and Dua, Vivek},
  journal={Engineering},
  volume={3},
  number={2},
  pages={202--213},
  year={2017},
  publisher={Elsevier}
}

@incollection{charitopoulos2017closed,
  title={Closed loop integration of planning, scheduling and control via exact multi-parametric nonlinear programming},
  author={Charitopoulos, Vassilis M and Dua, Vivek and Papageorgiou, Lazaros G},
  booktitle={Computer aided chemical engineering},
  volume={40},
  pages={1273--1278},
  year={2017},
  publisher={Elsevier}
}

@article{srinivasan2003,
  title={Dynamic optimization of batch processes: II. Role of measurements in handling uncertainty},
  author={Srinivasan, Balasubramaniam and Bonvin, Dominique and Visser, Erik and Palanki, Srinivas},
  journal={Computers \& chemical engineering},
  volume={27},
  number={1},
  pages={27--44},
  year={2003},
  publisher={Elsevier}
}

@article{franccois2005use,
  title={Use of measurements for enforcing the necessary conditions of optimality in the presence of constraints and uncertainty},
  author={Fran{\c{c}}ois, Gregory and Srinivasan, Bala and Bonvin, Dominique},
  journal={Journal of Process Control},
  volume={15},
  number={6},
  pages={701--712},
  year={2005},
  publisher={Elsevier}
}


\end{document}